\documentclass[a4paper, intlimits, reqno]{amsart}

\usepackage[english]{babel}
\usepackage[T1]{fontenc}
\usepackage[utf8]{inputenc}

\usepackage{amsmath}
\usepackage{amssymb}
\usepackage{MnSymbol}
\usepackage{amsthm}
\allowdisplaybreaks
\usepackage{amsfonts}
\usepackage{mathrsfs} 
\usepackage{mathtools}
\usepackage{nicefrac}
\usepackage{enumitem}

\usepackage{multicol}
\usepackage{url}
\usepackage{dsfont}
\usepackage[numbers,sort&compress]{natbib}
\usepackage{doi}
\usepackage{prettyref}
\usepackage{xcolor}
\usepackage{orcidlink}

\newrefformat{defn}{Definition \ref{#1}}
\newrefformat{rem}{Remark \ref{#1}}
\newrefformat{sect}{Section \ref{#1}}
\newrefformat{sub}{Section \ref{#1}}
\newrefformat{prop}{Proposition \ref{#1}}
\newrefformat{thm}{Theorem \ref{#1}}
\newrefformat{cor}{Corollary \ref{#1}}
\newrefformat{ex}{Example \ref{#1}}
\newrefformat{fig}{Figure \ref{#1}}
\newrefformat{ass}{Assumption \ref{#1}}
\newrefformat{con}{Conjecture \ref{#1}}


\swapnumbers
\newtheoremstyle{dotless}{}{}{\itshape}{}{\bfseries}{}{}{}
\theoremstyle{dotless}
\theoremstyle{plain}
\newtheorem{thm}{Theorem}[section]
\newtheorem{lem}[thm]{Lemma}
\newtheorem{prop}[thm]{Proposition}
\newtheorem{cor}[thm]{Corollary}
\theoremstyle{definition}
\newtheorem{defn}[thm]{Definition}
\newtheorem{rem}[thm]{Remark}
\newtheorem{exa}[thm]{Example}

\newcommand{\N} {\mathbb{N}}

\newcommand{\R} {\mathbb{R}}
\newcommand{\C} {\mathbb{C}}

\newcommand{\ran}{\operatorname{Ran}}

\DeclareMathOperator{\id}{id}
\DeclareMathOperator{\re}{Re}
\providecommand{\differential}{\mathrm{d}}
\renewcommand{\d}{\differential}
\newcommand{\euler}{\mathrm{e}}
\newcommand\rlim{
\mathchoice{\vcenter{\hbox{${\scriptstyle{+}}$}}}
{\vcenter{\hbox{$\scriptstyle{+}$}}}
{\vcenter{\hbox{$\scriptscriptstyle{+}$}}}
{\vcenter{\hbox{$\scriptscriptstyle{+}$}}}}
\newcommand{\vertiii}[1]{{\left\vert\kern-0.25ex\left\vert\kern-0.25ex\left\vert #1 
    \right\vert\kern-0.25ex\right\vert\kern-0.25ex\right\vert}}

\begin{document}

\title[A note on the Lumer--Phillips theorem for bi-continuous semigroups]{A note on the Lumer--Phillips theorem for bi-continuous semigroups}
\author[K.~Kruse]{Karsten Kruse\,\orcidlink{0000-0003-1864-4915}}
\address[Karsten Kruse and Christian Seifert]{Hamburg University of Technology\\ 
Institute of Mathematics \\ 
Am Schwarzenberg-Campus~3 \\
21073 Hamburg \\
Germany}
\email{karsten.kruse@tuhh.de}
\author[C.~Seifert]{Christian Seifert\,\orcidlink{0000-0001-9182-8687}}
\email{christian.seifert@tuhh.de}
\thanks{K.~Kruse acknowledges the support by the Deutsche Forschungsgemeinschaft (DFG) within the Research Training
 Group GRK 2583 ``Modeling, Simulation and Optimization of Fluid Dynamic Applications''.}

\subjclass[2020]{Primary 47B44 Secondary 47D06, 46A70}

\keywords{dissipativity, bi-continuous semigroup, Lumer--Phillips theorem, Saks space, mixed topology}

\date{\today}
\begin{abstract}
Given a Banach space $X$ and an additional coarser Hausdorff locally convex topology $\tau$ on $X$ we characterise the generators of 
$\tau$-bi-continuous semigroups in the spirit of the Lumer--Phillips theorem, i.e.~by means of dissipativity w.r.t.~a directed system of seminorms and a range condition.
\end{abstract}
\maketitle

\section{Introduction}

Characterising generators of strongly continuous semigroups on Banach spaces is a classical topic due to its relation to well-posedness of the corresponding abstract Cauchy problem \cite[Chap.~II, 6.7 Theorem, p.~150]{engel_nagel2000}. The two main generation theorems go back to Hille--Yosida \cite{hille1948, yosida1948} and Feller--Miyadera--Phillips \cite{feller1952, miyadera1952, phillips1952} for general semigroups and Lumer--Phillips \cite{lumer_phillips1961} for contraction semigroups.

However, there are important examples of semigroups which are not strongly continuous for the Banach space norm, e.g.~the Gau{\ss}--Weierstra{\ss} semigroup on $\mathrm{C}_{\operatorname{b}}(\R^d)$.
To circumvent this issue the concept of bi-continuous semigroups which are strongly continuous only w.r.t.~to a weaker Hausdorff locally convex topology $\tau$ has been introduced by K\"uhnemund \cite{kuehnemund2001}. Thus a natural question is to characterise the generators of bi-continuous semigroups in the spirit of the Hille--Yosida theorem and the Lumer--Phillips theorem. While a version of the Hille--Yosida theorem for bi-continuous semigroups was established directly at the beginning of the theory \cite{kuehnemund2003}, a corresponding version of the Lumer--Phillips theorem for bi-continuous contraction semigroups was missing. Recently, in \cite{budde_wegner2022}, Budde and Wegner introduced the notion of bi-dissipativity to characterise the generators of bi-continuous contraction semigroups. However, the result in \cite{budde_wegner2022} does not cover the key example of the Gau{\ss}--Weierstra{\ss} semigroup on $\mathrm{C}_{\operatorname{b}}(\R^d)$ (see \cite[Example 3.9, p.~8]{budde_wegner2022}), see also \prettyref{rem:bi_dissip_not_reasonable} below. 

In this paper we make use of the notion of $\Gamma$-dissipativity, where $\Gamma$ is a directed set of seminorms generating a topology related to the mixed topology $\gamma\coloneqq \gamma(\|\cdot\|,\tau)$ (see \cite{wiweger1961} and \prettyref{defn:mixed_top_Saks} for the mixed topology), introduced in \cite{albanese2016} to prove versions of the Lumer--Phillips theorem for bi-continuous (contraction) semigroups in \prettyref{thm:gen_bi_cont_contraction} and \prettyref{thm:lumer_phillips_mixed_type}. Note that also \cite{budde_wegner2022} used $\Gamma$-dissipativity to define bi-dissipativity; however, there $\Gamma$ is a fundamental system of seminorms generating the topology $\tau$. Working instead with the mixed topology yields a more natural concept which can also be applied to the Gau{\ss}--Weierstra{\ss} semigroup, see \prettyref{rem:bi_dissip_not_reasonable}.

Let us mention that strongly continuous semigroups have also been considered in locally convex spaces, in particular including a Lumer--Phillips-type generation theorem, see \cite{albanese2016}.

Note that with these results we also answer a question on characterising generators of transition semigroups raised by Markus Kunze, see Problem 3 in \cite[p.~4]{open_problems_OPSO2022}.

In \prettyref{sect:notions} we review the notion of bi-continuous semigroups and their generators as well as the (sub-)mixed topology. We also recall further properties of locally convex spaces as well as operators which we make use of later on. In \prettyref{sect:lumer_phillips} we recall the notion of $\Gamma$-dissipativity and then prove the version of the Lumer--Phillips theorem, where we also comment on its relation to \cite{budde_wegner2022}. Further, we also provide some examples.

\section{Notions and preliminaries}
\label{sect:notions}

In this short section we recall some basic notions and results in the context of bi-continuous semigroups.
For a vector space $X$ over the field $\R$ or $\C$ with a Hausdorff locally convex topology $\tau$ 
we denote by $(X,\tau)'$ the topological linear dual space and just write $X'\coloneqq (X,\tau)'$ 
if $(X,\tau)$ is a Banach space. By $\Gamma_{\tau}$ we always denote a directed system of continuous seminorms that 
generates the Hausdorff locally convex topology $\tau$ on $X$.
Further, for two Hausdorff locally convex spaces $(X,\tau)$ and $(Y,\sigma)$ we use the symbol $\mathcal{L}((X,\tau);(Y,\sigma))$ 
for the space of continuous linear operators from $(X,\tau)$ to $(Y,\sigma)$, and abbreviate $\mathcal{L}(X,\tau)\coloneqq\mathcal{L}((X,\tau);(X,\tau))$.
If $(X,\|\cdot\|_X)$ and $(Y,\|\cdot\|_Y)$ are Banach spaces, we denote by $\tau_{\|\cdot\|_{X}}$ and $\tau_{\|\cdot\|_{Y}}$ the corresponding topologies induced by the norms and just write $\mathcal{L}(X;Y)\coloneqq \mathcal{L}((X,\tau_{\|\cdot\|_X});(Y,\tau_{\|\cdot\|_Y}))$ with operator norm $\|\cdot\|_{\mathcal{L}(X;Y)}$, and $\mathcal{L}(X)\coloneqq \mathcal{L}(X;X)$.

Let us recall the definition of the mixed topology, \cite[Section 2.1]{wiweger1961}, and the notion of a Saks space, 
\cite[I.3.2 Definition, p.~27--28]{cooper1978}, which will be important for the rest of the paper.

\begin{defn}[{\cite[Definition 2.2, p.~3]{kruse_schwenninger2022}, \cite[Proposition 3.11 (a), p.~9]{budde_wegner2022}}]\label{defn:mixed_top_Saks}
Let $(X,\|\cdot\|)$ be a Banach space and $\tau$ a Hausdorff locally convex topology on $X$ that is coarser 
than the $\|\cdot\|$-topology $\tau_{\|\cdot\|}$.  Then
\begin{enumerate}
\item the \emph{mixed topology} $\gamma \coloneqq \gamma(\|\cdot\|,\tau)$ is
the finest linear topology on $X$ that coincides with $\tau$ on $\|\cdot\|$-bounded sets and such that 
$\tau\subseteq \gamma \subseteq\tau_{\|\cdot\|}$; 
\item a directed system of continuous seminorms $\Gamma_{\tau}$ that generates the topology $\tau$ is called \emph{norming} 
if 
\begin{equation}\label{eq:saks}
\|x\|=\sup_{p\in\Gamma_{\tau}} p(x), \quad x\in X;
\end{equation}
\item the triple $(X,\|\cdot\|,\tau)$ is called a \emph{Saks space} if there exists a norming directed system 
of continuous seminorms $\Gamma_{\tau}$ that generates the topology $\tau$.
\end{enumerate}
\end{defn}

The mixed topology is actually Hausdorff locally convex and the definition given above is equivalent to the one 
introduced by Wiweger \cite[Section 2.1]{wiweger1961} due to \cite[Lemmas 2.2.1, 2.2.2, p.~51]{wiweger1961}. 

\begin{defn}[{\cite[Definitions 2.2, 5.4, p.~2, 8]{kruse_seifert2022a}}]
Let $(X,\|\cdot\|,\tau)$ be a Saks space. 
\begin{enumerate}
\item We call $(X,\|\cdot\|,\tau)$ \emph{(sequentially) complete} if $(X,\gamma)$ is (sequentially) complete.
\item We call $(X,\|\cdot\|,\tau)$ \emph{semi-reflexive} if $(X,\gamma)$ is semi-reflexive.
\item We call $(X,\|\cdot\|,\tau)$ \emph{C-sequential} if $(X,\gamma)$ is C-sequential, i.e.~every convex 
sequentially open subset of $(X,\gamma)$ is already open (see \cite[p.~273]{snipes1973}).
\end{enumerate}
\end{defn}

\begin{rem}\label{rem:mixed_norming}
If  $(X,\|\cdot\|,\tau)$ is a sequentially complete Saks space, then $(X,\|\cdot\|,\gamma)$ is also a 
sequentially complete Saks space by \cite[Lemma 5.5, p.~2680--2681]{kruse_meichnser_seifert2018} 
and \cite[Remark 2.3 (c), p.~3]{kruse_schwenninger2022}. In particular, there exists a norming directed system of continuous 
seminorms $\Gamma_{\gamma}$ that generates $\gamma$.
\end{rem}

There is another kind of mixed topology (see \cite[p.~41]{cooper1978}) which becomes quite handy if one has to deal 
with the mixed topology because it is generated by a quite simple directed system of continuous seminorms and often coincides 
with the mixed topology. 

\begin{defn}[{\cite[Definition 3.9, p.~9]{kruse_schwenninger2022}}]\label{defn:submixed_top}
Let $(X,\|\cdot\|,\tau)$ be a Saks space and $\Gamma_{\tau}$ a norming directed system of continuous seminorms 
that generates the topology $\tau$. We set 
\[
\mathcal{N}\coloneqq \{(p_{n},a_{n})_{n\in\N}\;|\;(p_{n})_{n\in\N}\subseteq\Gamma_{\tau},\,(a_{n})_{n\in\N}\in c_{0},
\,a_{n}\geq 0\;\text{for all }n\in\N\}
\]
where $c_0$ is the space of real null-sequences.
For $(p_{n},a_{n})_{n\in\N}\in\mathcal{N}$ we define the seminorm
\[
 \vertiii{x}_{(p_{n},a_{n})_{n\in\N}}\coloneqq\sup_{n\in\N}p_{n}(x)a_{n},\quad x\in X.
\]
We denote by $\gamma_s\coloneqq\gamma_s(\|\cdot\|,\tau)$ the Hausdorff locally convex topology that is generated by 
the system of seminorms $(\vertiii{\cdot}_{(p_n,a_n)_{n\in\N}})_{(p_n,a_n)_{n\in\N}\in\mathcal{N}}$ and call it the \emph{submixed topology}.
\end{defn}

Due to \cite[I.1.10 Proposition, p.~9]{cooper1978}, \cite[I.4.5 Proposition, p.~41--42]{cooper1978} and 
\cite[Lemma A.1.2, p.~72]{farkas2003} we have the following observation.

\begin{rem}[{\cite[Remark 3.10, p.~9]{kruse_schwenninger2022}}]\label{rem:mixed=submixed}
Let $(X,\|\cdot\|,\tau)$ be a Saks space, $\Gamma_{\tau}$ a norming directed system of continuous seminorms 
that generates the topology $\tau$, $\gamma\coloneqq\gamma(\|\cdot\|,\tau)$ the mixed 
and $\gamma_{s}\coloneqq\gamma_{s}(\|\cdot\|,\tau)$ the submixed topology.
\begin{enumerate}
\item[(a)] We have $\tau\subseteq\gamma_s\subseteq \gamma$ and $\gamma_{s}$ has the same convergent 
sequences as $\gamma$.
\item[(b)] If 
 \begin{enumerate}
 \item[(i)] for every $x\in X$, $\varepsilon>0$ and $p\in\Gamma_{\tau}$ there are $y,z\in X$ such that $x=y+z$, 
 $p(z)=0$ and $\|y\|\leq p(x)+\varepsilon$, or 
 \item[(ii)] the $\|\cdot\|$-unit ball $B_{\|\cdot\|}=\{x\in X\;|\; \|x\|\leq 1\}$ is $\tau$-compact,
 \end{enumerate}
then $\gamma=\gamma_s$ holds. 
\end{enumerate}
\end{rem}

The submixed topology $\gamma_s$ was originally introduced in \cite[Theorem 3.1.1, p.~62]{wiweger1961} where a 
proof of \prettyref{rem:mixed=submixed} (b) can be found as well. 

\begin{rem}\label{rem:submixed_norming}
Let $(X,\|\cdot\|,\tau)$ be a Saks space and $\Gamma_{\tau}$ a norming directed system of continuous seminorms 
that generates the topology $\tau$. Then there is a norming directed system of continuous seminorms $\Gamma_{\gamma_s}$ 
that generates the submixed topology $\gamma_{s}=\gamma_{s}(\|\cdot\|,\tau)$. Indeed, we set 
\[
\mathcal{N}_{1}\coloneqq \{(p_{n},a_{n})_{n\in\N}\;|\;(p_{n})_{n\in\N}\subseteq\Gamma_{\tau},\,(a_{n})_{n\in\N}\in c_{0},
\,0\leq a_{n}\leq 1\;\text{for all }n\in\N\}
\]  
and $\Gamma_{\gamma_s}\coloneqq\{\vertiii{\cdot}_{(p_{n},a_{n})_{n\in\N}}\;|\;(p_{n},a_{n})_{n\in\N}\in\mathcal{N}_{1}\}$.
Let $(p_{n})_{n\in\N}\subseteq\Gamma_{\tau}$ and $(a_{n})_{n\in\N}\in c_{0}$ with $a_{n}\geq 0$ for all $n\in\N$. Then 
$C\coloneqq\sup_{n\in\N}a_{n}<\infty$ and w.l.o.g.~$C>0$. We have 
\[
 \vertiii{x}_{(p_{n},a_{n})_{n\in\N}}
=\sup_{n\in\N}p_{n}(x)a_{n}
\leq C\sup_{n\in\N}p_{n}(x)\frac{a_{n}}{C}
=C \vertiii{x}_{\left(p_{n},\tfrac{a_{n}}{C}\right)_{n\in\N}}
\]
for all $x\in X$. In combination with $\mathcal{N}_{1}\subseteq\mathcal{N}$ this shows that 
$\Gamma_{\gamma_s}$ generates $\gamma_s$. Furthermore, for every $p\in\Gamma_{\tau}$ we have with $p_{n}\coloneqq p$ 
for all $n\in\N$ that $p(x)\leq \vertiii{x}_{(p_{n},1/n)_{n\in\N}}$ for all $x\in X$. 
Together with the norming property of $\Gamma_{\tau}$ this implies
\[
 \|x\|
=\sup_{p\in\Gamma_{\tau}}p(x)
\leq \sup_{(p_{n},a_{n})_{n\in\N}\in\mathcal{N}_{1}}\vertiii{x}_{(p_{n},a_{n})_{n\in\N}}
\leq \sup_{(a_{n})_{n\in\N}\in c_{0},\, 0\leq a_{n}\leq 1}\|x\|a_{n}
\leq \|x\|
\]
for all $x\in X$. Hence $\Gamma_{\gamma_s}$ is norming.
\end{rem}

Recall that for a Banach space $(X,\|\cdot\|)$ and a Hausdorff locally convex topology $\tau$ on $X$ the triple $(X,\|\cdot\|,\tau)$ is called \emph{bi-admissible space} if $\tau$ is coarser than $\tau_{\|\cdot\|}$, $\tau$ is sequentially complete on the $\|\cdot\|$-closed unit ball (or equivalently on $\|\cdot\|$-bounded sets), and $(X,\tau)'$ is norming for $X$; cf.\ \cite[Assumption 2.1, p.~3]{budde_wegner2022}.

\begin{lem}
    Let $(X,\|\cdot\|)$ be a Banach space and $\tau$ a Hausdorff locally convex topology on $X$ that is coarser 
than the $\|\cdot\|$-topology $\tau_{\|\cdot\|}$. Then the following are equivalent:
    \begin{enumerate}
     \item[(i)] $(X,\|\cdot\|,\tau)$ is a sequentially complete Saks space.
     \item[(ii)] $(X,\|\cdot\|,\tau)$ is a bi-admissible space.
    \end{enumerate}
\end{lem}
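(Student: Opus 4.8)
The plan is to split the equivalence into two independent sub-equivalences: the existence of a norming directed system of $\tau$-continuous seminorms generating $\tau$ is equivalent to $(X,\tau)'$ being norming, while sequential completeness of $(X,\gamma)$ is equivalent to sequential completeness of $\tau$ on the $\|\cdot\|$-bounded sets. Since $\tau\subseteq\tau_{\|\cdot\|}$ is assumed throughout, establishing these two pieces and recombining them yields the claim, and no completeness hypothesis enters the norming half, so there is no circularity.

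For the norming equivalence, I would first assume (i) with a norming $\Gamma_{\tau}$, so that $p\le\|\cdot\|$ for all $p\in\Gamma_{\tau}$ by \eqref{eq:saks}. Given $x$ and $\varepsilon>0$ I pick $p\in\Gamma_{\tau}$ with $p(x)>\|x\|-\varepsilon$ and, by Hahn--Banach, a linear functional $x'$ dominated by $p$ with $x'(x)=p(x)$; then $x'\in(X,\tau)'$ and $\|x'\|\le 1$, so the unit ball of $(X,\tau)'$ recovers $\|x\|$ up to $\varepsilon$. Conversely, assuming $(X,\tau)'$ norming, I take $\Gamma$ to be the set of all $\tau$-continuous seminorms $p$ with $p\le\|\cdot\|$. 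Every $\tau$-continuous seminorm is $\|\cdot\|$-continuous, hence a scalar multiple of an element of $\Gamma$, so $\Gamma$ is directed and generates $\tau$; and it is norming, since each $|\langle\cdot,x'\rangle|$ with $x'\in(X,\tau)'$, $\|x'\|\le 1$, lies in $\Gamma$ and the supremum of these equals $\|\cdot\|$.

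For the completeness equivalence I would use two standard structural properties of the mixed topology $\gamma$ (valid whenever $\tau\subseteq\tau_{\|\cdot\|}$): $\gamma$ coincides with $\tau$ on $\|\cdot\|$-bounded sets, and the $\gamma$-bounded sets are precisely the $\|\cdot\|$-bounded ones. Assuming (ii), a $\gamma$-Cauchy sequence is $\gamma$-bounded, hence $\|\cdot\|$-bounded, and $\tau$-Cauchy because $\tau\subseteq\gamma$; sequential completeness of $\tau$ on bounded sets gives a $\tau$-limit $x$, which lies in the same ball because $(X,\tau)'$ is norming, whence $\gamma$-convergence to $x$ follows from the coincidence of $\gamma$ and $\tau$ there. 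Conversely, assuming (i), let $(x_n)$ be $\|\cdot\|$-bounded (say in $rB_{\|\cdot\|}$) and $\tau$-Cauchy. For any $\gamma$-neighborhood $U$ of $0$, the set $U\cap(2rB_{\|\cdot\|})$ is relatively $\tau$-open, so there is a $\tau$-neighborhood $V$ of $0$ with $V\cap(2rB_{\|\cdot\|})\subseteq U$; since all differences $x_n-x_m$ lie in $2rB_{\|\cdot\|}$, $\tau$-Cauchyness makes $(x_n)$ $\gamma$-Cauchy, and sequential $\gamma$-completeness yields a limit that is a fortiori a $\tau$-limit.

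The Hahn--Banach construction and the neighborhood comparisons are routine; the genuine content lies in the completeness transfer, whose correctness rests entirely on the two cited properties of $\gamma$ --- that it agrees with $\tau$ on $\|\cdot\|$-bounded sets and that its bounded sets coincide with the $\|\cdot\|$-bounded sets. I expect pinning these down (with references to \cite{wiweger1961,cooper1978}) to be the main obstacle, whereas the norming half is self-contained and independent of completeness, so the two halves combine cleanly.
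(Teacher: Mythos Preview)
Your proposal is correct and follows the same two-part decomposition as the paper's proof: the norming equivalence (Saks space $\Leftrightarrow$ $(X,\tau)'$ norming) and the completeness equivalence (sequential $\gamma$-completeness $\Leftrightarrow$ $\tau$ sequentially complete on $\|\cdot\|$-bounded sets). The paper simply cites \cite[Corollary 2.3.2]{wiweger1961} and \cite[Remark 2.3(c)]{kruse_schwenninger2022} for these two facts, whereas you spell out the Hahn--Banach and mixed-topology arguments explicitly.
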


\begin{proof}
    By \cite[Corollary 2.3.2, p.~55]{wiweger1961} a Saks space $(X,\|\cdot\|,\tau)$ is sequentially complete 
    if and only if $(X,\tau)$ is sequentially complete on $\|\cdot\|$-bounded sets, meaning that every $\|\cdot\|$-bounded 
    $\tau$-Cauchy sequence converges in $X$. Combined with \cite[Remark 2.3 (c), p.~3]{kruse_schwenninger2022} it follows
    that a triple $(X,\|\cdot\|,\tau)$ fulfils \cite[Assumptions 1, p.~206]{kuehnemund2003}, which provides a bi-admissible space,
    if and only if it is a sequentially complete Saks space.
\end{proof}

Let us recall the notion of a bi-continuous semigroup.
 
\begin{defn}[{\cite[Definition 3, p.~207]{kuehnemund2003}}]\label{defn:bi_continuous}
Let $(X,\|\cdot\|,\tau)$ be a sequentially complete Saks space. 
A family $(T(t))_{t\geq 0}$ in $\mathcal{L}(X)$ is called $\tau$\emph{-bi-continuous semigroup} if 
\begin{enumerate}
\item[(i)] $(T(t))_{t\geq 0}$ is a \emph{semigroup}, i.e.~$T(t+s)=T(t)T(s)$ and $T(0)=\id$ for all $t,s\geq 0$,
\item[(ii)] $(T(t))_{t\geq 0}$ is $\tau$\emph{-strongly continuous}, 
i.e.~the map $T_{x}\colon[0,\infty)\to(X,\tau)$, $T_{x}(t)\coloneqq T(t)x$, is continuous for all $x\in X$, 
\item[(iii)] $(T(t))_{t\geq 0}$ is \emph{exponentially bounded} (of type $\omega$), i.e.~there exists $M\geq 1$, 
$\omega\in\R$ such that $\|T(t)\|_{\mathcal{L}(X)}\leq M\euler^{\omega t}$ for all $t\geq 0$,
\item[(iv)] $(T(t))_{t\geq 0}$ is \emph{locally bi-equicontinuous}, 
i.e.~for every sequence $(x_n)_{n\in\N}$ in $X$, $x\in X$ 
with $\sup\limits_{n\in\N}\|x_n\|<\infty$ and $\tau\text{-}\lim\limits_{n\to\infty} x_n = x$ it holds that
     \[
      \tau\text{-}\lim_{n\to\infty} T(t)(x_n-x) = 0
     \]
locally uniformly for all $t\in [0,\infty)$.
\end{enumerate}
\end{defn}

\begin{rem}\label{rem:sequentially_mixed_bi_cont}
Let $(X,\|\cdot\|,\tau)$ be a Saks space. 
\begin{enumerate}
\item A sequence in $X$ is $\gamma$-convergent if and only if it is $\|\cdot\|$-bounded and $\tau$-convergent by 
\cite[I.1.10 Proposition, p.~9]{cooper1978}. 
\item Let $(X,\|\cdot\|,\tau)$ be sequentially complete. A semigroup of linear operators $(T(t))_{t\geq 0}$ from $X$ to $X$ 
is $\gamma$-strongly continuous and \emph{locally sequentially $\gamma$-equicontinuous} (i.e.~for all $\gamma$-null sequences $(x_n)_{n\in\N}$ in $X$, $t_0>0$ and $p\in\Gamma_\gamma$ we have $\lim_{n\to\infty}\sup_{t\in[0,t_0]} p(T(t)x_n)=0$)
if and only if it is a $\tau$-bi-continuous semigroup on $X$. 
This follows directly fom part (a) and \cite[Remark 2.6 (b), p.~5]{kruse_schwenninger2022}, and remains true if 
$\gamma$ is replaced by any other Hausdorff locally convex topology on $X$ that has the same convergent sequences as $\gamma$
(cf.~\cite[Proposition A.1.3, p.~73]{farkas2003} for $\gamma$ replaced by $\gamma_s$).
\end{enumerate}
\end{rem} 

We already observed in \prettyref{rem:sequentially_mixed_bi_cont} (b) that 
$\tau$-bi-continuous semigroups are locally sequentially $\gamma$-equicontinuous. 
Under some mild conditions on the Saks space $(X,\|\cdot\|,\tau)$ they are even 
quasi-$\gamma$-equicontinuous. Let us recall what that means.  

\begin{defn}
Let $(X,\upsilon)$ be a Hausdorff locally convex space and $\Gamma_{\upsilon}$ a directed system of continuous seminorms that 
generates $\upsilon$.
A family $(T(t))_{t\in I}$ of linear maps from $X$ to $X$ is called $\upsilon$\emph{-equicontinuous} if  
\[
\forall\;p\in\Gamma_{\upsilon}\;\exists\;\widetilde{p}\in\Gamma_{\upsilon},\;C\geq 0\;
\forall\;t\in I,\,x\in X:\;p(T(t)x)\leq C\widetilde{p}(x).
\]
The family $(T(t))_{t\geq 0}$ is called \emph{locally $\upsilon$-equicontinuous} if $(T(t))_{t\in[0,t_{0}]}$  
is $\upsilon$-equiconti\-nuous for all $t_{0}\geq 0$.
The family $(T(t))_{t\geq 0}$ is called \emph{quasi-$\upsilon$-equicontinuous} if there is $\alpha\in\R$ such that 
$(\euler^{-\alpha t}T(t))_{t\geq 0}$ is $\upsilon$-equicontinuous. Note that one often drops the $\upsilon$ if the topology is clear.
\end{defn}

\begin{rem}\label{rem:relation_gamma_bi_cont_sg}
Let $(X,\|\cdot\|,\tau)$ be a sequentially complete Saks space. Due to \prettyref{rem:sequentially_mixed_bi_cont} (b) 
a $\gamma$-strongly continuous, locally $\gamma$-equicontinuous semigroup of linear operators $(T(t))_{t\geq 0}$ from $X$ to $X$
is a $\tau$-bi-continuous semigroup on $X$. The converse is not true in general by \cite[Example 4.1, p.~320]{farkas2011}. 
However, if $(X,\|\cdot\|,\tau)$ is C-sequential, then the converse also holds 
by \cite[Theorem 3.17 (a), p.~13]{kruse_schwenninger2022}. Even more is true, namely, that every 
$\tau$-bi-continuous semigroup on $X$ is quasi-$\gamma$-equicontinuous if $(X,\|\cdot\|,\tau)$ is C-sequential.
\end{rem}

There is another related notion to equicontinuity on Saks spaces.

\begin{defn}[{\cite[Definitions 3.4, 3.5, p.~6, 7]{kruse_schwenninger2022}}]\label{defn:equitight}
Let $(X,\|\cdot\|,\tau)$ be a Saks space and $\Gamma_{\tau}$ a directed system of continuous
seminorms generating the topology $\tau$. A family of linear maps $(T(t))_{t\in I}$ from $X$ to $X$ is called 
$(\|\cdot\|,\tau)$\emph{-equitight}
if 
\[
\forall\;\varepsilon>0,\,p\in\Gamma_{\tau}\;\exists\;\widetilde{p}\in\Gamma_{\tau},\,C\geq 0\;
\forall\;t\in I,\,x\in X:\;p(T(t)x)\leq C \widetilde{p}(x)+\varepsilon\|x\|.
\]
The family $(T(t))_{t\geq 0}$ is called \emph{locally $(\|\cdot\|,\tau)$-equitight} if 
$(T(t))_{t\in[0,t_{0}]}$ is $(\|\cdot\|,\tau)$-equitight for all $t_{0}\geq 0$.
The family $(T(t))_{t\geq 0}$ is called \emph{quasi-$(\|\cdot\|,\tau)$-equitight} if there is $\alpha\in\R$ such that 
$(\euler^{-\alpha t}T(t))_{t\geq 0}$ is $(\|\cdot\|,\tau)$-equitight.
\end{defn}

At first, tight operators $T\in\mathcal{L}(X)$ as well as families of equitight operators 
$(T(t))_{t\in [0,t_{0}]}$ in $\mathcal{L}(X)$ for $t_{0}\geq 0$ appeared in \cite[Definitions 1.2.20, 1.2.21, p.~12]{farkas2003} 
under the name \emph{local}.
In the setting of $\tau$-bi-continuous semigroups $(T(t))_{t\geq 0}$ the notion of tightness is used in 
\cite[Definition 1.1, p.~668]{es_sarhir2006}, meaning that $(T(t))_{t\in [0,t_{0}]}$ is equitight (or 
local) for all $t_{0}\geq 0$.  
Local equitightness plays an important role in perturbation results for bi-continu\-ous semigroups, 
see e.g.~\cite[Theorem 1.2, p.~669]{es_sarhir2006}, \cite[Theorems 2.4, 3.2, p.~92, 94--95]{farkas2004}, 
\cite[Remark 4.1, p.~101]{farkas2004}, \cite[Theorem 5, p.~8]{budde2021}, \cite[Theorem 3.3, p.~582]{budde2021a}, 
and the corrections regarding \cite{budde2021} in \cite[Remark 3.8, p.~8--9]{kruse_schwenninger2022}.

Due to \cite[Proposition 3.16, p.~12--13]{kruse_schwenninger2022} $(\|\cdot\|,\tau)$-equitightness of a family of linear maps 
$(T(t))_{t\in I}$ from $X$ to $X$ implies $\gamma$-equicontinuity. If $(X,\|\cdot\|,\tau)$ is a sequentially complete 
C-sequential Saks space and $\gamma=\gamma_s$, then any $\tau$-bi-continuous semigroup $(T(t))_{t\geq 0}$ on $X$ is 
quasi-$\gamma$-equicontinuous and quasi-$(\|\cdot\|,\tau)$-equitight 
by \cite[Theorem 3.17, p.~13]{kruse_schwenninger2022}, and both properties are equivalent 
by \cite[Proposition 3.16, p.~12--13]{kruse_schwenninger2022}.

We close this section by recalling the definition of the generator of a $\tau$-bi-continuous semigroup and 
two of its properties which we will need.

\begin{defn}[{\cite[Definition 1.2.6, p.~7]{farkas2003}}]\label{defn:bi_cont_generator}
Let $(X,\|\cdot\|,\tau)$ be a sequentially complete Saks space
and $(T(t))_{t\geq 0}$ a $\tau$-bi-continuous semigroup on $X$. The \emph{generator} $(A,D(A))$ 
is defined by
\begin{align*}
D(A)\coloneqq &\Bigl\{x\in X\;|\;\tau\text{-}\lim_{t\to 0\rlim}\frac{T(t)x-x}{t}\;\text{exists in } X\;
\text{and }\sup_{t\in(0,1]}\frac{\|T(t)x-x\|}{t}<\infty\Bigr\},\\
Ax\coloneqq &\tau\text{-}\lim_{t\to 0\rlim}\frac{T(t)x-x}{t},\quad x\in D(A).
\end{align*}
\end{defn}

\begin{prop}[{\cite[Corollary 13, p.~215]{kuehnemund2003}}]\label{prop:generator}
Let $(X,\|\cdot\|,\tau)$ be a sequentially complete Saks space   
and $(T(t))_{t\geq 0}$ a $\tau$-bi-continuous semigroup on $X$ with generator $(A,D(A))$. 
Then the following assertions hold:
\begin{enumerate}
\item The generator $(A,D(A))$ is \emph{bi-closed}, i.e.~whenever $(x_{n})_{n\in\N}$ is a sequence in 
$D(A)$ such that $\tau\text{-}\lim_{n\to\infty}x_{n}=x$ and $\tau\text{-}\lim_{n\to\infty}Ax_{n}=y$ 
for some $x,y\in X$ and both sequences are $\|\cdot\|$-bounded, then $x\in D(A)$ and $Ax=y$.
\item The generator $(A,D(A))$ is \emph{bi-densely defined}, i.e.~for each $x\in X$ there exists a $\|\cdot\|$-bounded sequence 
$(x_{n})_{n\in\N}$ in $D(A)$ such that $\tau\text{-}\lim_{n\to\infty}x_{n}=x$.
\end{enumerate}
\end{prop}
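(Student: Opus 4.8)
The final statement is Proposition~\ref{prop:generator}, which asserts that the generator $(A,D(A))$ of a $\tau$-bi-continuous semigroup is bi-closed and bi-densely defined. Before examining the cited proof from Kühnemund, let me sketch how I would establish these two properties.

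\textbf{Plan.} The key tool will be the integral representation of the resolvent. The plan is to first show that for $\lambda > \omega$ (where $\omega$ is the type of the semigroup), the improper Riemann--Stieltjes integral
\[
R(\lambda)x \coloneqq \tau\text{-}\int_0^\infty \euler^{-\lambda t}T(t)x\,\d t
\]
exists in $X$ for every $x\in X$. Here I would exploit that the integrand $t\mapsto \euler^{-\lambda t}T(t)x$ is $\tau$-continuous by Definition~\ref{defn:bi_continuous}(ii) and norm-dominated by $M\euler^{(\omega-\lambda)t}\|x\|$ by (iii); the norm bound guarantees that the partial integrals over $[0,s]$ are $\|\cdot\|$-bounded uniformly in $s$, so $\tau$-sequential completeness on $\|\cdot\|$-bounded sets (equivalently, the sequential completeness of the Saks space) yields convergence of the improper integral in $X$, with $\|R(\lambda)x\|\leq \frac{M}{\lambda-\omega}\|x\|$.

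\textbf{Bi-density.} For bi-density I would show that $\lambda R(\lambda)x \to x$ in a suitable sense as $\lambda\to\infty$, and that each $R(\lambda)x$ lies in $D(A)$. The first claim follows from the standard computation $\lambda R(\lambda)x - x = \lambda\int_0^\infty \euler^{-\lambda t}(T(t)x - x)\,\d t$ combined with the $\tau$-continuity of $t\mapsto T(t)x$ at $0$ and the local $\gamma$-equicontinuity/equitightness recalled in Remark~\ref{rem:sequentially_mixed_bi_cont}(b); the uniform norm bound $\|\lambda R(\lambda)x\|\leq \frac{M\lambda}{\lambda-\omega}\|x\|$ keeps the approximating net $\|\cdot\|$-bounded, so choosing $\lambda = n$ produces the required $\|\cdot\|$-bounded sequence in $D(A)$ that $\tau$-converges to $x$. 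That $R(\lambda)x\in D(A)$ with $(\lambda - A)R(\lambda)x = x$ is the usual resolvent identity, proved by applying the semigroup to the integral and using $\frac{T(h)-\id}{h}$ under the integral sign, again justified by local bi-equicontinuity.

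\textbf{Bi-closedness.} For bi-closedness, suppose $(x_n)$ in $D(A)$ with $x_n\xrightarrow{\tau}x$, $Ax_n\xrightarrow{\tau}y$, both sequences $\|\cdot\|$-bounded. The cleanest route is via the integrated relation $T(t)x_n - x_n = \tau\text{-}\int_0^t T(s)Ax_n\,\d s$, valid for $x_n\in D(A)$. Passing to the limit in $n$ on both sides, using that $T(s)$ is $\gamma$-continuous and that the integrands stay $\|\cdot\|$-bounded (so $\tau$-limits commute with the integral by dominated convergence adapted to the Saks-space setting via Remark~\ref{rem:sequentially_mixed_bi_cont}), gives $T(t)x - x = \tau\text{-}\int_0^t T(s)y\,\d s$. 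Dividing by $t$ and letting $t\to 0^+$ yields $\tau\text{-}\lim_{t\to 0^+}\frac{T(t)x-x}{t} = y$, while the integral bound gives $\sup_{t\in(0,1]}\frac{\|T(t)x-x\|}{t}\leq M\euler^{|\omega|}\sup_n\|Ax_n\|<\infty$ via a lower-semicontinuity argument for the norm; hence $x\in D(A)$ and $Ax=y$.

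\textbf{Main obstacle.} The delicate point throughout is justifying the interchange of $\tau$-limits (in $n$, and as $t\to 0$ or $\lambda\to\infty$) with the improper $\tau$-valued integrals. The norm $\|\cdot\|$ is typically only $\tau$-lower-semicontinuous, not $\tau$-continuous, so the uniform $\|\cdot\|$-boundedness hypotheses are essential: they confine everything to $\|\cdot\|$-bounded sets where $\tau$ agrees with $\gamma$ on sequences (Remark~\ref{rem:sequentially_mixed_bi_cont}(a)) and where local bi-equicontinuity supplies the uniform control needed to swap limit and integral. Getting these dominated-convergence-type arguments to run rigorously in the mixed-topology framework, rather than the scalar verification against $(X,\tau)'$ that one might be tempted to use, is where the real care is required.
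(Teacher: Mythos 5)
Your proposal is correct and for the most part reconstructs the argument of the reference the paper cites for this statement (\cite[Corollary 13, p.~215]{kuehnemund2003}); note the paper itself gives no proof, it simply quotes the result. Your bi-density argument is exactly the cited one: $\lambda R(\lambda)x\to x$ w.r.t.\ $\tau$ as $\lambda\to\infty$ with $\|\lambda R(\lambda)x\|\leq \tfrac{\lambda M}{\lambda-\omega}\|x\|$, then take $\lambda=n$. Where you genuinely diverge is bi-closedness. In the cited source, the Laplace-transform theorem (which you need anyway to define $R(\lambda)$) already yields that $R(\lambda,A)=(\lambda-A)^{-1}$ exists for $\lambda>\omega$ and maps $\|\cdot\|$-bounded $\tau$-convergent sequences to $\tau$-convergent sequences; bi-closedness is then a two-line consequence: from $x_{n}=R(\lambda,A)(\lambda x_{n}-Ax_{n})$ and $\lambda x_{n}-Ax_{n}\to\lambda x-y$ ($\|\cdot\|$-bounded, $\tau$-convergent) one gets $x=R(\lambda,A)(\lambda x-y)\in D(A)$ and $Ax=y$. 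You instead pass to the limit in the integrated identity $T(t)x_{n}-x_{n}=\tau\text{-}\int_{0}^{t}T(s)Ax_{n}\,\d s$, which is the classical Engel--Nagel route transplanted to the bi-continuous setting; it is valid, but it silently relies on the orbit-differentiability/fundamental-theorem lemma for elements of $D(A)$ (which you assert rather than prove, and whose proof in a locally convex setting typically does go through the scalar reduction against $(X,\tau)'$ that you deprecate), plus the uniform-on-compacts convergence $T(s)(Ax_{n}-y)\to 0$ supplied by local bi-equicontinuity and the $\tau$-lower semicontinuity of $\|\cdot\|$ coming from the norming system \eqref{eq:saks} --- all of which you correctly identify as the crux. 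What each approach buys: the resolvent route is shorter once the Hille--Yosida-type machinery is in place and localises all limit interchanges in a single theorem; your route is more self-contained at the level of the semigroup itself and makes the dominated-convergence mechanics explicit, which is in fact the same mechanism this paper uses (Riemann sums that are $\|\cdot\|$-bounded and $\tau$-convergent, hence $\gamma$-convergent by \prettyref{rem:sequentially_mixed_bi_cont}) in the proof of \prettyref{prop:gen_bi_cont_contraction}.
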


\section{Lumer--Phillips for bi-continuous semigroups} 
\label{sect:lumer_phillips}

First, we recall the relevant notions from \cite{albanese2016} concerning dissipative linear operators 
on Hausdorff locally convex spaces. 
We write in short that $(A,D(A))$ is a linear operator on a Hausdorff locally convex space $X$ if $A\colon D(A)\subseteq X\to X$ is 
a linear operator. 

\begin{defn}[{\cite[Definitions 3.1, 3.5, p.~923]{albanese2016}}]
Let $(X,\upsilon)$ be a Hausdorff locally convex space and $(A,D(A))$ a linear operator on $X$. 
\begin{enumerate}
\item $(A,D(A))$ is called \emph{$\upsilon$-closed} if for each net $(x_{i})_{i\in I}\subseteq D(A)$ 
satisfying $x_{i}\to x$ and $Ax_{i}\to y$ w.r.t.~$\upsilon$ for some $x,y\in X$, we have $x\in D(A)$ and $Ax=y$.
\item A linear operator $(B,D(B))$ on $X$ is called an \emph{extension} of $(A,D(A))$ if $D(A)\subseteq D(B)$ and $B_{\mid D(A)}=A$.  
The operator $(A,D(A))$ is called \emph{$\upsilon$-closable} if it admits a $\upsilon$-closed extension. 
The smallest $\upsilon$-closed extension of an $\upsilon$-closable operator $(A,D(A))$ is called the \emph{$\upsilon$-closure} 
of $(A,D(A))$ and denoted by $(\overline{A},D(\overline{A}))$.
\item $(A,D(A))$ is called \emph{(sequentially) $\upsilon$-densely defined} if $D(A)$ is (sequentially) $\upsilon$-dense in $X$.
\item Let $(A,D(A))$ be $\upsilon$-densely defined. The \emph{$\upsilon$-dual operator} $(A',D(A'))$ of $(A,D(A))$ on $(X,\upsilon)'$ 
is defined by setting
\[
D(A')\coloneqq\{x'\in(X,\upsilon)'\;|\;\exists\;y'\in(X,\upsilon)'\;\forall\;x\in D(A):\;\langle Ax,x' \rangle=\langle x, y' \rangle\}
\]
and $A'x'\coloneqq y'$ for $x'\in D(A')$.
\end{enumerate}
\end{defn}

We have the following relation to the notions of a bi-closed resp.~bi-densely defined operator.

\begin{rem}\label{rem:mixed_closed_densely_defined}
Let $(X,\|\cdot\|,\tau)$ be a Saks space and $(A,D(A))$ a linear operator. 
Due to \prettyref{rem:sequentially_mixed_bi_cont} (a) $(A,D(A))$ is bi-closed (see \prettyref{prop:generator} (a)) 
if and only if it is sequentially $\gamma$-closed (which is defined analogously to $\gamma$-closedness but with nets 
replaced by sequences). 
Again by \prettyref{rem:sequentially_mixed_bi_cont} (a) $(A,D(A))$ is bi-densely defined 
(see \prettyref{prop:generator} (b)) if and only if it is sequentially $\gamma$-densely defined. 
Moreover, if $(A,D(A))$ is sequentially $\gamma$-densely defined, then it is obviously $\gamma$-densely defined, too (as every sequence is a net).
\end{rem}

\begin{defn}[{\cite[p.~922]{albanese2016}}]
Let $(X,\upsilon)$ be a Hausdorff locally convex space and $(A,D(A))$ a linear operator on $X$. 
If $\lambda\in\C$ is such that $\lambda-A\coloneqq \lambda\id -A\colon D(A)\to X$ is injective, then the linear operator 
$(\lambda-A)^{-1}$ exists and is defined on the domain $\ran(\lambda-A)\coloneqq \{(\lambda-A)x\;|\;x\in D(A)\}$, 
i.e.~the range of $\lambda-A$. The \emph{resolvent set} of $A$ is defined by 
\[
\rho_{\upsilon}(A)\coloneqq\{\lambda\in\C\;|\;\lambda-A\text{ is bijective and }(\lambda-A)^{-1}\in\mathcal{L}(X,\upsilon)\}.
\]
If $\upsilon=\tau_{\|\cdot\|}$ for a Banach space $(X,\|\cdot\|)$, we just write $R(\lambda,A)\coloneqq (\lambda-A)^{-1}$ 
and $\rho(A)\coloneqq\rho_{\upsilon}(A)$. 
\end{defn}

\begin{defn}[{\cite[Definition 3.9, p.~925]{albanese2016}}]
Let $(X,\upsilon)$ be a Hausdorff locally convex space and $\Gamma_{\upsilon}$ a directed system of continuous seminorms that 
generates $\upsilon$. A linear operator $(A,D(A))$ on $X$ is called 
$\Gamma_{\upsilon}$\emph{-dissipative} if 
\[
\forall\;\lambda>0,\,x\in D(A),\,p\in\Gamma_{\upsilon}:\;p((\lambda-A)x)\geq\lambda p(x).
\]
\end{defn}

It is important to note that in contrast to equicontinuity or equitightness the notion of dissipativity depends 
on the selection of the directed system of continuous seminorms 
that generates the topology $\upsilon$ by \cite[Remark 3.10, p.~925--926]{albanese2016}.

\begin{rem}\label{rem:norming_diss_Banach_diss}
Let $(X,\|\cdot\|,\tau)$ be a Saks space, $\upsilon$ a Hausdorff locally convex topology on $X$ 
and $(A,D(A))$ a $\Gamma_{\upsilon}$-dissipative operator on $X$. If $\Gamma_{\upsilon}$ is norming, then it 
follows from the $\Gamma_{\upsilon}$-dissipativity and \eqref{eq:saks} that
\[
\forall\;\lambda>0,\,x\in D(A):\;\|(\lambda-A)x\|\geq\lambda \|x\|.
\]
Thus $(A,D(A))$ is also a dissipative operator on the Banach space $(X,\|\cdot\|)$ in the sense of 
\cite[Chap.~II, 3.13 Definition, p.~82]{engel_nagel2000} (cf.~\cite[Remark 3.3 (i), p.~5]{budde_wegner2022} for $\upsilon=\tau$). 
We also denote such kind of dissipativity on a Banach space $(X,\|\cdot\|)$ by $\|\cdot\|$\emph{-dissipativity}.
\end{rem}

In \cite{budde_wegner2022} another notion of dissipativity on Saks spaces was introduced. 

\begin{rem}\label{rem:bi_dissip_submixed}
Let $(X,\|\cdot\|,\tau)$ be a sequentially complete Saks space. In \cite[Definition 3.2, p.~5]{budde_wegner2022} 
a linear operator $(A,D(A))$ on $X$ is called \emph{bi-dissipative} if there exists a norming directed system of 
continuous seminorms $\Gamma_{\tau}$ that generates $\tau$ such that $(A,D(A))$ is $\Gamma_{\tau}$-dissipative. 
It is then shown in the proof of \cite[Theorem 3.15, p.~11]{budde_wegner2022} that a bi-dissipative operator $(A,D(A))$ 
is also $(\vertiii{\cdot}_{(p_n,a_n)_{n\in\N}})_{(p_n,a_n)_{n\in\N}\in\mathcal{N}}$-dissipative 
since 
\[
 \vertiii{(\lambda-A)x}_{(p_{n},a_{n})_{n\in\N}}
=\sup_{n\in\N}p_{n}((\lambda-A)x)a_{n}
\geq \sup_{n\in\N}\lambda p_{n}(x)a_{n}
=\lambda \vertiii{x}_{(p_{n},a_{n})_{n\in\N}}
\]
for all $\lambda>0$, $x\in D(A)$ and $(p_n,a_n)_{n\in\N}\in\mathcal{N}$, where 
$(\vertiii{\cdot}_{(p_n,a_n)_{n\in\N}})_{(p_n,a_n)_{n\in\N}\in\mathcal{N}}$ is the system of seminorms that generates 
the submixed topology $\gamma_s$ from \prettyref{defn:submixed_top}. We observe that this also implies that 
$(A,D(A))$ is $\Gamma_{\gamma_s}$-dissipative w.r.t~the norming directed system of continuous seminorms $\Gamma_{\gamma_s}$
that generates the submixed topology $\gamma_s$ from \prettyref{rem:submixed_norming}.
\end{rem}

\begin{prop}\label{prop:lambda_A}
Let $(X,\|\cdot\|,\tau)$ be a sequentially complete Saks space, $\upsilon$ a Hausdorff locally convex topology on $X$, 
$(A,D(A))$ a $\Gamma_{\upsilon}$-dissipative operator on $X$. Then the following assertions hold:
\begin{enumerate}
\item $\lambda-A$ is injective for all $\lambda>0$. Moreover, we have 
\begin{equation}\label{eq:res_cont}
\forall\;\lambda>0,\,x\in\ran(\lambda-A),\,p\in\Gamma_{\upsilon}:\;p((\lambda-A)^{-1}x)\leq \frac{1}{\lambda} p(x).
\end{equation}
\item If $\ran(\lambda-A)$ is (sequentially) $\upsilon$-closed for some $\lambda>0$, then $(A,D(A))$ is (sequentially) 
$\upsilon$-closed. If $\upsilon=\gamma$, then the converse even holds for all $\lambda>0$.
\item Let $\Gamma_{\upsilon}$ be norming. Then $\lambda-A$ is surjective for some $\lambda>0$ if and only 
if it is surjective for all $\lambda>0$. In such a case, $(0,\infty)\subseteq \rho(A)$.
\item Let $\upsilon=\gamma$. Then $\lambda-A$ is surjective for some $\lambda>0$ if and only 
if it is surjective for all $\lambda>0$. In such a case, $(0,\infty)\subseteq \rho_{\gamma}(A)$.
\end{enumerate}
\end{prop}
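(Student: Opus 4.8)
I would read off both assertions directly from $\Gamma_{\upsilon}$-dissipativity. If $(\lambda-A)x=0$ for some $x\in D(A)$ and $\lambda>0$, then $0=p((\lambda-A)x)\geq\lambda p(x)$ forces $p(x)=0$ for every $p\in\Gamma_{\upsilon}$, and since $\upsilon$ is Hausdorff this gives $x=0$; hence $\lambda-A$ is injective. To obtain \eqref{eq:res_cont}, I would write a generic element of $\ran(\lambda-A)$ as $x=(\lambda-A)y$ with $y=(\lambda-A)^{-1}x\in D(A)$ and apply dissipativity to $y$, so that $p(x)=p((\lambda-A)y)\geq\lambda p((\lambda-A)^{-1}x)$.

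\textbf{Part (b).} The content of \eqref{eq:res_cont} is exactly that $(\lambda-A)^{-1}$ is $\upsilon$-continuous on $\ran(\lambda-A)$, and this is the engine for both directions. For the forward implication, assume $\ran(\lambda-A)$ is $\upsilon$-closed and take a net (resp.\ sequence) $(x_i)\subseteq D(A)$ with $x_i\to x$ and $Ax_i\to y$ in $\upsilon$. Then $(\lambda-A)x_i=\lambda x_i-Ax_i\to\lambda x-y$, and this limit lies in the $\upsilon$-closed set $\ran(\lambda-A)$, say $\lambda x-y=(\lambda-A)z$ with $z\in D(A)$. Applying the $\upsilon$-continuous operator $(\lambda-A)^{-1}$ yields $x_i=(\lambda-A)^{-1}(\lambda-A)x_i\to z$, so $x=z\in D(A)$ by uniqueness of limits, and then $Ax=\lambda x-(\lambda-A)x=y$; note that no completeness is needed, since closedness of the range was assumed. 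For the converse with $\upsilon=\gamma$, I would run the estimate the other way: if $y_n=(\lambda-A)x_n\to w$ in $\gamma$, then \eqref{eq:res_cont} makes $x_n=(\lambda-A)^{-1}y_n$ a $\gamma$-Cauchy sequence, which converges to some $x$ by sequential completeness of $(X,\gamma)$ (valid because $(X,\|\cdot\|,\tau)$ is a sequentially complete Saks space); then $Ax_n=\lambda x_n-y_n\to\lambda x-w$, and sequential $\gamma$-closedness gives $x\in D(A)$ with $(\lambda-A)x=w$, i.e.\ $w\in\ran(\lambda-A)$.

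\textbf{Parts (c) and (d).} Both rest on a single perturbation-and-connectedness scheme. First, if $\lambda_0-A$ is surjective, then \eqref{eq:res_cont} shows that its inverse is defined and $\upsilon$-continuous on all of $X$; in case (d) this already gives $\lambda_0\in\rho_{\gamma}(A)$, and the forward part of (b) (applied to the $\gamma$-closed range $\ran(\lambda_0-A)=X$) shows that $A$ is $\gamma$-closed, while in case (c) the norming hypothesis and \prettyref{rem:norming_diss_Banach_diss} yield $\|(\lambda_0-A)^{-1}\|\leq 1/\lambda_0$, hence $\lambda_0\in\rho(A)$ and $A$ is $\|\cdot\|$-closed. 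Writing $R\coloneqq(\lambda_0-A)^{-1}$ and factoring $\lambda-A=(\lambda_0-A)\bigl(\id+(\lambda-\lambda_0)R\bigr)$, I would invert the bracket through the Neumann series $\sum_{n\geq 0}(\lambda_0-\lambda)^nR^n$. The iterated estimate $p(R^nx)\leq\lambda_0^{-n}p(x)$ (resp.\ $\|R^n\|\leq\lambda_0^{-n}$) renders the partial sums $\gamma$-Cauchy (resp.\ norm-Cauchy) as soon as $|\lambda-\lambda_0|<\lambda_0$, so they converge by sequential completeness of $(X,\gamma)$ (resp.\ by completeness of the Banach space). Setting $u\coloneqq\sum_{n\geq 0}(\lambda_0-\lambda)^nR^{n+1}f$ and using $AR=\lambda_0R-\id$ together with the respective closedness of $A$ to pass $A$ through the limit, I would check $u\in D(A)$ and $(\lambda-A)u=f$, so that $\lambda-A$ is surjective and, with \eqref{eq:res_cont}, $\lambda\in\rho_{\gamma}(A)$ (resp.\ $\lambda\in\rho(A)$) whenever $|\lambda-\lambda_0|<\lambda_0$. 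Since the relative radius of convergence at every point $\mu$ of the surjectivity set equals $\mu$, that set is open and closed in $(0,\infty)$; being nonempty and $(0,\infty)$ connected, it equals $(0,\infty)$, which yields the claimed equivalence as well as $(0,\infty)\subseteq\rho_{\gamma}(A)$ (resp.\ $\subseteq\rho(A)$). The subtle point here is the passage $u\in D(A)$ with $(\lambda-A)u=f$ in case (d), where the $\gamma$-closedness of $A$ and the \emph{mere} sequential completeness of $(X,\gamma)$ must be used with care, as $\gamma$ is in general neither metrizable nor complete; the same limitation explains why the converse in (b) is phrased for sequences, the net version requiring genuine completeness of $(X,\gamma)$.
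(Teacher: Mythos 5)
Your proposal is correct, but it is genuinely more self-contained than the paper's proof, which consists entirely of two citations: parts (a), (b) and (d) are attributed to \cite[Proposition 3.11, p.~927]{albanese2016} in combination with the sequential completeness of $(X,\gamma)$, and part (c) to \cite[Chap.~II, 3.14 Proposition (ii), p.~82]{engel_nagel2000} via \prettyref{rem:norming_diss_Banach_diss}. What you write out by hand is essentially the content hidden in those references: injectivity and the estimate \eqref{eq:res_cont} from dissipativity; the observation that \eqref{eq:res_cont} plus a closed range yields closedness of $A$ with no completeness at all; the converse via Cauchy sequences and sequential completeness of $(X,\gamma)$; and, for (c) and (d), the factorisation $\lambda-A=(\lambda_0-A)(\id+(\lambda-\lambda_0)R)$, the Neumann series with radius of convergence $\lambda_0$, and the nonempty-open-closed-connectedness argument in $(0,\infty)$ --- run once in the Banach norm (your use of the norming hypothesis to get $\|R\|\leq 1/\lambda_0$ is exactly the reduction of \prettyref{rem:norming_diss_Banach_diss} that the paper invokes) and once in $\gamma$. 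Your unified treatment of (c) and (d) makes two points visible that the citations obscure: part (d) really needs only sequential completeness, because the Neumann partial sums form a \emph{sequence}; and in fact the appeal to closedness of $A$ there is dispensable, since $u=\lim_N Rs_N=Rg\in D(A)$ follows from the $\gamma$-continuity of $R$ alone, after which $(\lambda-A)u=(\lambda-\lambda_0)Rg+g=f$ is immediate from the fixed-point identity $g=f+(\lambda_0-\lambda)Rg$. The one place you stop short --- the net version of the converse in (b), which you defer to genuine completeness of $(X,\gamma)$ --- is not a defect of your argument relative to the paper: a $\gamma$-Cauchy net (unlike a Cauchy sequence) has no reason to converge under mere sequential completeness, so the paper's citation, which only has sequential completeness of $(X,\gamma)$ at its disposal, covers that case no better; your explicit flag about the pairing of the completeness and closedness notions is if anything the more careful reading.
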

\begin{proof}
Parts (a), (b) and (d) are just \cite[Proposition 3.11, p.~927]{albanese2016} in combination with the sequential completeness 
of $(X,\gamma)$. Part (c) is a consequence of \cite[Chap.~II, 3.14 Proposition (ii), p.~82]{engel_nagel2000} 
and \prettyref{rem:norming_diss_Banach_diss}. 
\end{proof}

In the case $\upsilon=\tau$ parts (a) and (c) of \prettyref{prop:lambda_A} are \cite[Proposition 3.4, p.~6]{budde_wegner2022}.

\begin{defn}
Let $(X,\|\cdot\|)$ be a Banach space. We call a semigroup of linear operators $(T(t))_{t\geq 0}$ from $X$ to $X$ a 
\emph{contraction} semigroup if $\|T(t)\|_{\mathcal{L}(X)}\leq 1$ for all $t\geq 0$.
\end{defn}

\begin{thm}\label{thm:gen_bi_cont_contraction}
Let $(X,\|\cdot\|,\tau)$ be a sequentially complete Saks space, $\upsilon$ a Hausdorff locally convex topology on $X$ with 
$\tau\subseteq\upsilon\subseteq\tau_{\|\cdot\|}$ such that $\gamma$-convergent sequences are $\upsilon$-convergent, $(A,D(A))$ a bi-densely defined, $\Gamma_{\upsilon}$-dissipative operator on $X$ and $\Gamma_{\upsilon}$ norming.
Then the following assertions are equivalent:
\begin{enumerate}
\item $(A,D(A))$ generates a $\tau$-bi-continuous contraction semigroup on $X$. 
\item $\lambda-A$ is surjective for some $\lambda>0$.
\end{enumerate}
\end{thm}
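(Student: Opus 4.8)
The plan is to establish the nontrivial implication (b)$\Rightarrow$(a) by a Yosida approximation performed simultaneously in the seminorms of $\Gamma_{\upsilon}$ and in the norm, and to obtain (a)$\Rightarrow$(b) from the Laplace transform representation of the resolvent. For (a)$\Rightarrow$(b): if $(A,D(A))$ generates a $\tau$-bi-continuous contraction semigroup $(T(t))_{t\ge0}$, it is of type $(1,0)$, so by the standard theory of bi-continuous semigroups \cite{kuehnemund2003} the resolvent exists for every $\lambda>0$ and is given by the $\tau$-Laplace transform $R(\lambda,A)x=\int_0^\infty\euler^{-\lambda t}T(t)x\,\d s$; in particular $(0,\infty)\subseteq\rho(A)$, whence $\lambda-A$ is surjective for all $\lambda>0$.

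For (b)$\Rightarrow$(a) I would first invoke \prettyref{prop:lambda_A}. Since $\Gamma_{\upsilon}$ is norming, part (c) upgrades surjectivity for one $\lambda>0$ to $(0,\infty)\subseteq\rho(A)$, while part (a) together with \prettyref{rem:norming_diss_Banach_diss} gives the two resolvent estimates $\|R(\lambda,A)\|_{\mathcal{L}(X)}\le 1/\lambda$ and $p(R(\lambda,A)^{n}x)\le\lambda^{-n}p(x)$ for all $p\in\Gamma_{\upsilon}$, $n\in\N$. I then set $A_{\lambda}\coloneqq\lambda AR(\lambda,A)=\lambda^{2}R(\lambda,A)-\lambda\id\in\mathcal{L}(X)$ and $T_{\lambda}(t)\coloneqq\euler^{tA_{\lambda}}$. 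Expanding the exponential and summing the power estimate yields the crucial contraction bounds $p(T_{\lambda}(t)x)\le p(x)$ for every $p\in\Gamma_{\upsilon}$, $t\ge0$, and $\|T_{\lambda}(t)x\|\le\|x\|$. A density argument based on $\|\lambda R(\lambda,A)\|_{\mathcal{L}(X)}\le1$ and the bi-density of $D(A)$ (\prettyref{prop:generator}) shows $\lambda R(\lambda,A)x\to x$ in $\upsilon$ for every $x\in X$, hence $A_{\lambda}x\to Ax$ in $\upsilon$ for $x\in D(A)$.

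The heart of the argument is the limit $\lambda\to\infty$. Commutativity of the $T_{\lambda}$, $A_{\lambda}$ together with seminorm contractivity gives, for $x\in D(A)$, the Cauchy estimate $p(T_{\lambda}(t)x-T_{\mu}(t)x)\le t\,p((A_{\lambda}-A_{\mu})x)$ uniformly on bounded $t$-intervals and all $p\in\Gamma_{\upsilon}$. Thus $(T_{\lambda}(t)x)$ is $\upsilon$-Cauchy, hence $\tau$-Cauchy (as $\tau\subseteq\upsilon$) and $\|\cdot\|$-bounded, so by sequential completeness of the Saks space it $\gamma$-converges to some $T(t)x$; the hypothesis that $\gamma$-convergent sequences are $\upsilon$-convergent then transports the bounds $\|\cdot\|\le\|x\|$ and $p(\,\cdot\,)\le p(x)$ to $T(t)x$. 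The same argument, using once more bi-density and $\|T_{\lambda}(t)\|_{\mathcal{L}(X)}\le1$, extends $T(t)$ and the convergence $T_{\lambda}(t)x\to T(t)x$ to all $x\in X$. The semigroup law, contractivity $\|T(t)\|_{\mathcal{L}(X)}\le1$, and $\tau$-strong continuity then follow by routine limiting arguments (the $\upsilon$-continuity of $t\mapsto T_{\lambda}(t)x$ and the uniform convergence give $\upsilon$-, hence $\tau$-continuity of $t\mapsto T(t)x$). Local bi-equicontinuity (\prettyref{defn:bi_continuous}(iv)) is then immediate, since a $\|\cdot\|$-bounded $\tau$-null sequence is $\gamma$-null, hence $\upsilon$-null by hypothesis, so $q(T(t)x_{n})\le Cp(x_{n})\to0$ uniformly in $t$ for suitable $q\in\Gamma_{\tau}$, $p\in\Gamma_{\upsilon}$.

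Finally I would identify the generator $(B,D(B))$ of $(T(t))_{t\ge0}$ with $(A,D(A))$. Passing to the limit in $T_{\lambda}(t)x-x=\int_0^t T_{\lambda}(s)A_{\lambda}x\,\d s$ yields $T(t)x-x=\int_0^t T(s)Ax\,\d s$ for $x\in D(A)$; dividing by $t$ and letting $t\to0\rlim$ gives $A\subseteq B$ (the difference quotient is $\|\cdot\|$-bounded by $\|Ax\|$, so the domain condition of \prettyref{defn:bi_cont_generator} holds). As $\lambda-A$ and $\lambda-B$ are both bijective for $\lambda>0$ and $A\subseteq B$, the usual resolvent argument forces $D(B)\subseteq D(A)$, so $A=B$. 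I expect the main obstacle to be the topological bookkeeping in this limit: the approximants converge only in $\gamma$ (via norm-boundedness, $\tau$-Cauchyness, and sequential completeness), whereas all quantitative contraction estimates live in $\Gamma_{\upsilon}$, so the hypothesis that $\gamma$-convergent sequences are $\upsilon$-convergent is precisely what is needed to carry those estimates—and with them local bi-equicontinuity—over to the limit semigroup.
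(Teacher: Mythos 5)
Your proposal is correct, and for the nontrivial implication (b)$\Rightarrow$(a) it takes a genuinely different route from the paper. The paper proves both implications by reduction to the Hille--Yosida theorem for bi-continuous semigroups: (a)$\Rightarrow$(b) via \cite[Theorem 16, p.~217]{kuehnemund2003} with $\omega=0$ (exactly as you do), and (b)$\Rightarrow$(a) by verifying the hypotheses of \cite[Theorem 5.6, p.~340]{budde2019}, namely $(0,\infty)\subseteq\rho(A)$, the norm bounds $\|R(\lambda,A)^{n}\|_{\mathcal{L}(X)}\leq\lambda^{-n}$, and bi-equicontinuity of $\{(\lambda-\alpha)^{n}R(\lambda,A)^{n}\;|\;n\in\N,\,\lambda\geq\alpha\}$; that last verification uses precisely the two ingredients you also isolate, i.e.~domination $q\leq Cp$ of $\tau$-seminorms by $\upsilon$-seminorms (from $\tau\subseteq\upsilon$) together with \eqref{eq:res_cont}, and the hypothesis that $\gamma$-convergent sequences are $\upsilon$-convergent. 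You instead rebuild the semigroup from scratch by Yosida approximation, carrying the contraction estimates simultaneously in $\|\cdot\|$ and in $\Gamma_{\upsilon}$, and using sequential completeness of the Saks space ($\|\cdot\|$-bounded $\tau$-Cauchy sequences converge) plus the $\gamma$-to-$\upsilon$ sequence hypothesis to transport these estimates to the limit. This is longer and essentially re-proves the generation theorem the paper cites, but it is more self-contained and yields more: your construction gives $p(T(t)x)\leq p(x)$ for all $p\in\Gamma_{\upsilon}$ and $t\geq 0$, which is exactly the refinement the paper proves separately in \prettyref{prop:gen_bi_cont_contraction}, there deduced via the Post--Widder inversion formula rather than via Yosida approximants. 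Note that you are not fully independent of the cited machinery: the Laplace-transform representation \cite[Theorem 12, p.~215]{kuehnemund2003} is still needed in (a)$\Rightarrow$(b) and, implicitly, for the injectivity of $\lambda-B$ in your final generator identification. Two small points of bookkeeping: in the density step, the operator-norm bound $\|\lambda R(\lambda,A)\|_{\mathcal{L}(X)}\leq 1$ alone does not give $\lambda R(\lambda,A)x\to x$ in $\upsilon$ on all of $X$ --- you need the uniform seminorm estimate $p(\lambda R(\lambda,A)y)\leq p(y)$ from \eqref{eq:res_cont}, which you do have; and the norm contraction of the limit is most cleanly obtained from the norming property, $\|T(t)x\|=\sup_{p\in\Gamma_{\upsilon}}p(T(t)x)\leq\sup_{p\in\Gamma_{\upsilon}}p(x)=\|x\|$, rather than by any semicontinuity argument.
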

\begin{proof}
We use the Hille--Yosida theorem for bi-continuous semigroups to prove both implications 
(see \cite[Theorem 16, p.~217]{kuehnemund2003} and \cite[Theorem 5.6, p.~340]{budde2019}).  

(a)$\Rightarrow$(b): Let $(A,D(A))$ generate a $\tau$-bi-continuous contraction semigroup on $X$. 
Due to \cite[Theorem 16, p.~217]{kuehnemund2003} with $\omega=0$ we obtain that $(0,\infty)\subseteq\rho(A)$, in particular, 
that $\lambda-A$ is surjective for all $\lambda>0$. 

(b)$\Rightarrow$(a): Let $\lambda-A$ be surjective for some $\lambda>0$. Due to \cite[Theorem 5.6, p.~340]{budde2019} we 
only need to prove that 
\begin{enumerate}
\item[(i)] $(0,\infty)\subseteq\rho(A)$,
\item[(ii)] $\|R(\lambda,A)^{n}\|_{\mathcal{L}(X)}\leq \frac{1}{\lambda^{n}}$ for all $n\in\N$ and all $\lambda>0$, and
\item[(iii)] $\{(\lambda-\alpha)^{n}R(\lambda,A)^{n}\;|\;n\in\N,\,\lambda\geq\alpha\}$ is \emph{bi-equicontinuous} 
for each $\alpha>0$, i.e.~for each $\alpha>0$ and each $\|\cdot\|$-bounded $\tau$-null sequence $(x_{m})_{m\in\N}$ in $X$ 
one has that $\tau$-$\lim_{m\to\infty}(\lambda-\alpha)^{n}R(\lambda,A)^{n}x_{m}=0$ uniformly for all $n\in\N$ 
and all $\lambda\geq\alpha$.
\end{enumerate}
Since $(A,D(A))$ is $\Gamma_{\upsilon}$-dissipative, we get that $\lambda-A$ is bijective for all $\lambda>0$ and 
$\rho(A)\subseteq (0,\infty)$ by \prettyref{prop:lambda_A} (a) and (c). 
From \prettyref{rem:norming_diss_Banach_diss}
and $\Gamma_{\upsilon}$ being norming we deduce that $\|R(\lambda,A)x\|\leq \tfrac{1}{\lambda}\|x\|$ for all $\lambda>0$ 
and $x\in\ran(\lambda-A)=X$, yielding $\|R(\lambda,A)^{n}\|_{\mathcal{L}(X)}\leq \tfrac{1}{\lambda^{n}}$ for all $n\in\N$ 
and $\lambda>0$.
Let $\Gamma_{\tau}$ be a directed system of continuous 
seminorms that generates the topology $\tau$ and $q\in\Gamma_{\tau}$. Thanks to \eqref{eq:res_cont} 
we know that $p((\lambda-A)^{-1}x)\leq \frac{1}{\lambda} p(x)$ for all $p\in\Gamma_{\upsilon}$, $\lambda>0$ and 
$x\in\ran(\lambda-A)=X$. As $\tau\subseteq\upsilon$, there are $p\in\Gamma_{\upsilon}$ and $C\geq 0$ 
such that for each $\alpha>0$ we have
\begin{align*}
    q((\lambda-\alpha)^{n}R(\lambda,A)^{n}x)
&\leq C(\lambda-\alpha)^{n} p((\lambda-A)^{-n}x)
 \leq C\frac{(\lambda-\alpha)^{n}}{\lambda^n} p(x)\\
&\leq C\left(1-\frac{\alpha}{\lambda}\right)^{n}p(x)
 \leq Cp(x)
\end{align*}
for all $x\in X$, $n\in\N$ and $\lambda\geq\alpha$. Since $\|\cdot\|$-bounded $\tau$-null sequences are exactly 
the $\gamma$-null-sequences by \prettyref{rem:sequentially_mixed_bi_cont} (a) and $\gamma$-convergent sequences are assumed 
to be $\upsilon$-convergent, this inequality implies that 
$\{(\lambda-\alpha)^{n}R(\lambda,A)^{n}\;|\;n\in\N,\,\lambda\geq\alpha\}$ is bi-equicontinuous for all $\alpha>0$. 
This finishes the proof.
\end{proof}

In the case $\upsilon=\tau$ we know that $\gamma$-convergent sequences are $\tau$-convergent and thus 
\prettyref{thm:gen_bi_cont_contraction} is \cite[Theorem 3.6, p.~6]{budde_wegner2022} 
(without the superfluous assumption that $A$ should be norm-closed) in this case. 
Another possible choice is $\upsilon=\gamma_{s}$ since the submixed topology $\gamma_s$ has the same convergent 
sequences as $\gamma$ by \prettyref{rem:mixed=submixed} (a). 
However, we are mostly interested in the choice $\upsilon=\gamma$.
Our second generation result involves complete Saks spaces.

\begin{thm}\label{thm:lumer_phillips_mixed_type}
Let $(X,\|\cdot\|,\tau)$ be a complete Saks space and $(A,D(A))$ a $\gamma$-densely defined, $\Gamma_{\gamma}$-dissipative operator. 
Assume that $\ran(\lambda-A)$ is $\gamma$-dense in $X$ for some $\lambda>0$. Then the following assertions hold:
\begin{enumerate}
\item The $\gamma$-closure $(\overline{A},D(\overline{A}))$ generates a $\gamma$-strongly continuous, 
$\gamma$-equicontinu\-ous semigroup $(T(t))_{t\geq 0}$ on $X$. 
\item If $\Gamma_{\gamma}$ is norming, then $(T(t))_{t\geq 0}$ is a contraction semigroup.
\item If $\Gamma_{\gamma}$ is norming and $\gamma=\gamma_s$, then $(T(t))_{t\geq 0}$ is $(\|\cdot\|,\tau)$-equitight.
\end{enumerate}
\end{thm}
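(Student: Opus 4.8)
The plan is to let assertion~(a) carry the main weight and to read off (b) and (c) from the seminorm estimates produced in the course of proving~(a). The strategy for~(a) is to upgrade the mere $\gamma$-density of $\ran(\lambda-A)$ to the genuine range condition $\ran(\lambda-\overline{A})=X$ for the $\gamma$-closure, and then to feed this into the Hille--Yosida/Lumer--Phillips machinery for equicontinuous semigroups on complete locally convex spaces from \cite{albanese2016}.

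First I would use that $(X,\gamma)$ is complete, which is exactly the meaning of a complete Saks space. By \prettyref{prop:lambda_A}~(a) the map $\lambda-A$ is injective and $(\lambda-A)^{-1}$ satisfies $p((\lambda-A)^{-1}y)\leq\tfrac{1}{\lambda}p(y)$ for all $p\in\Gamma_{\gamma}$ and $y\in\ran(\lambda-A)$; hence $(\lambda-A)^{-1}$ is $\gamma$-continuous on the $\gamma$-dense subspace $\ran(\lambda-A)$ and, by completeness of $(X,\gamma)$, extends uniquely to some $R_{\lambda}\in\mathcal{L}(X,\gamma)$ obeying the same seminorm bound. I would then identify $R_{\lambda}$ as the resolvent of the $\gamma$-closure: for $y\in X$ choose $y_{i}\in\ran(\lambda-A)$ with $y_{i}\to y$ and set $x_{i}\coloneqq(\lambda-A)^{-1}y_{i}\to R_{\lambda}y$; then $Ax_{i}=\lambda x_{i}-y_{i}\to\lambda R_{\lambda}y-y$, so $A$ is $\gamma$-closable and the definition of the $\gamma$-closure gives $R_{\lambda}y\in D(\overline{A})$ with $(\lambda-\overline{A})R_{\lambda}y=y$, i.e.\ $\ran(\lambda-\overline{A})=X$. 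Passing to the limit in the dissipativity inequality (each $p\in\Gamma_{\gamma}$ being $\gamma$-continuous) shows that $\overline{A}$ is again $\Gamma_{\gamma}$-dissipative, and $\overline{A}\supseteq A$ is $\gamma$-densely defined.

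With surjectivity of $\lambda-\overline{A}$ for one $\lambda>0$ secured, \prettyref{prop:lambda_A}~(d) upgrades this to $(0,\infty)\subseteq\rho_{\gamma}(\overline{A})$, and iterating the estimate in \prettyref{prop:lambda_A}~(a) gives $p(R(\mu,\overline{A})^{n}x)\leq\mu^{-n}p(x)$ for all $\mu>0$, $n\in\N$, $p\in\Gamma_{\gamma}$, $x\in X$. Thus $\{\mu^{n}R(\mu,\overline{A})^{n}\mid\mu>0,\,n\in\N\}$ is $\gamma$-equicontinuous, so by the equicontinuous Hille--Yosida theorem on the complete space $(X,\gamma)$ (see \cite{albanese2016}) the operator $\overline{A}$ generates a $\gamma$-strongly continuous, $\gamma$-equicontinuous semigroup $(T(t))_{t\geq 0}$, proving~(a). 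The exponential formula $T(t)x=\lim_{n\to\infty}\bigl(\tfrac{n}{t}R(\tfrac{n}{t},\overline{A})\bigr)^{n}x$ combined with the resolvent estimate even yields the sharper bound $p(T(t)x)\leq p(x)$ for all $p\in\Gamma_{\gamma}$, $t\geq 0$, $x\in X$, which I would record for use in~(b). The hard part here is precisely this surjectivity/closability step: the hypothesis only supplies $\gamma$-density of the range, and it is completeness of $(X,\gamma)$ that converts this into $\ran(\lambda-\overline{A})=X$.

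The remaining parts are then short. For~(b), if $\Gamma_{\gamma}$ is norming I would combine $p(T(t)x)\leq p(x)$ with the norming identity (the analogue of \eqref{eq:saks} for $\Gamma_{\gamma}$) to obtain $\|T(t)x\|=\sup_{p\in\Gamma_{\gamma}}p(T(t)x)\leq\sup_{p\in\Gamma_{\gamma}}p(x)=\|x\|$, so that $(T(t))_{t\geq 0}$ is a contraction semigroup. For~(c), the semigroup is already $\gamma$-equicontinuous by~(a); assuming additionally $\gamma=\gamma_{s}$, the equivalence of $\gamma$-equicontinuity and $(\|\cdot\|,\tau)$-equitightness from \cite[Proposition 3.16]{kruse_schwenninger2022} then shows directly that $(T(t))_{t\geq 0}$ is $(\|\cdot\|,\tau)$-equitight.
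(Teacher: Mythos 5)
Your overall architecture is sound, but there is one genuine gap, at the closability step in part~(a). From $x_{i}\coloneqq(\lambda-A)^{-1}y_{i}\to R_{\lambda}y$ and $Ax_{i}=\lambda x_{i}-y_{i}\to\lambda R_{\lambda}y-y$ you conclude ``so $A$ is $\gamma$-closable''. This is a non sequitur: exhibiting convergent nets in the graph of $A$ only shows that the pairs $(R_{\lambda}y,\lambda R_{\lambda}y-y)$ lie in the $\gamma$-closure of the graph; closability is the assertion that this closure is itself a graph, i.e.\ that $x_{i}\to 0$ and $Ax_{i}\to z$ force $z=0$, and nothing in your construction addresses that. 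Without closability the operator $(\overline{A},D(\overline{A}))$ is not even defined, so the identification $(\lambda-\overline{A})R_{\lambda}y=y$ and everything built on it has nothing to stand on. The missing ingredient is the standard lemma that a $\gamma$-densely defined, $\Gamma_{\gamma}$-dissipative operator is $\gamma$-closable: if $x_{i}\to 0$ and $Ax_{i}\to z$, apply the dissipativity inequality with $\mu=1/t$ to $x_{i}+tw$ for $w\in D(A)$ and $t>0$, pass to the limit in $i$ to obtain $p(w-z-tAw)\geq p(w)$, let $t\to 0$ to get $p(w-z)\geq p(w)$, and finally let $w\to z$ through $D(A)$ (this is where the $\gamma$-density of $D(A)$ enters) to conclude $p(z)=0$ for all $p\in\Gamma_{\gamma}$, hence $z=0$. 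This lemma, together with the $\Gamma_{\gamma}$-dissipativity of $\overline{A}$ and the surjectivity of $\lambda-\overline{A}$ for all $\lambda>0$ that you re-derive by hand, is exactly \cite[Proposition 3.13, p.~929]{albanese2016}, which the paper itself invokes. Once this is patched, your part~(a) is correct.

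Apart from this, your route differs genuinely from the paper's. For~(a) the paper simply cites the locally convex Lumer--Phillips theorem \cite[Theorem 3.14, p.~929]{albanese2016}, whereas you essentially inline its proof (extension of $(\lambda-A)^{-1}$ by completeness of $(X,\gamma)$, surjectivity of $\lambda-\overline{A}$, the iterated estimate from \eqref{eq:res_cont}, then the equicontinuous Hille--Yosida theorem). The more substantial divergence is in~(b): the paper shows that $(T(t))_{t\geq 0}$ is $\tau$-bi-continuous, identifies its generator as a bi-continuous semigroup with $\overline{A}$ (via \cite{kruse_schwenninger2022a}), concludes that $\overline{A}$ is bi-densely defined, and then applies \prettyref{thm:gen_bi_cont_contraction} with $\upsilon=\gamma$, i.e.\ it routes the contraction property through the Hille--Yosida theorem for bi-continuous semigroups. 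You instead read off $p(T(t)x)\leq p(x)$ for all $p\in\Gamma_{\gamma}$ and take the supremum over the norming system as in \eqref{eq:saks}. This is shorter and bypasses the bi-continuous machinery entirely; its only cost is that the exponential formula $T(t)x=\gamma\text{-}\lim_{n\to\infty}\bigl(\tfrac{n}{t}R(\tfrac{n}{t},\overline{A})\bigr)^{n}x$ for equicontinuous semigroups on complete locally convex spaces is stated nowhere in the paper and needs its own reference (it is classical, e.g.\ in Yosida's book; alternatively, the bound $p(T(t)x)\leq p(x)$ can be extracted directly from the Yosida-approximation construction of the semigroup underlying \cite{albanese2016}). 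Part~(c) coincides with the paper's argument.
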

\begin{proof}
(a) Due to \cite[Theorem 3.14, p.~929]{albanese2016} $(\overline{A},D(\overline{A}))$ generates a 
$\gamma$-equicontinuous, $\gamma$-strongly continuous semigroup $(T(t))_{t\geq 0}$ on $X$. 

(b) By \cite[Proposition 3.13, p.~929]{albanese2016} the operator $(\overline{A},D(\overline{A}))$ is also 
$\Gamma_{\gamma}$-dissipative and $\lambda-\overline{A}$ is surjective for all $\lambda>0$. 
As a consequence of part (a) and \prettyref{rem:sequentially_mixed_bi_cont} $(T(t))_{t\geq 0}$ is also a 
$\tau$-bi-continuous semigroup on $X$. By \cite[p.~5]{kruse_schwenninger2022a} the generator $(\overline{A},D(\overline{A}))$ 
of $(T(t))_{t\geq 0}$ as a $\gamma$-strongly continuous, $\gamma$-equicontinuous semigroup (see \cite[p.~922]{albanese2016}) 
and the generator of $(T(t))_{t\geq 0}$ as a $\tau$-bi-continuous semigroup (see \prettyref{defn:bi_cont_generator}) coincide. 
Thus $(\overline{A},D(\overline{A}))$ is bi-densely defined by \prettyref{prop:generator} (b).
Hence we get that $(T(t))_{t\geq 0}$ is a contraction semigroup by \prettyref{thm:gen_bi_cont_contraction} with $\upsilon=\gamma$ 
and the norming property of $\Gamma_{\gamma}$. 

(c) It follows from part (b) that $\|T(t)\|_{\mathcal{L}(X)}\leq 1$ for all $t\geq 0$. 
Further, $(T(t))_{t\geq 0}$ is $\gamma$-equicontinuous by part (a). In combination with $\gamma=\gamma_s$ 
we derive that $(T(t))_{t\geq 0}$ is $(\|\cdot\|,\tau)$-equitight by \cite[Proposition 3.16, p.~12--13]{kruse_schwenninger2022}.
\end{proof}

Let us compare \prettyref{thm:lumer_phillips_mixed_type} with one of the main theorems of \cite{budde_wegner2022}, namely, 
\cite[Theorem 3.15, p.~11]{budde_wegner2022}. 
We note that the topology that is called mixed topology (and denoted by $\gamma$ there) in \cite[p.~10]{budde_wegner2022} is 
actually the submixed topology $\gamma_s$. With this observation at hand let us phrase \cite[Theorem 3.15, p.~11]{budde_wegner2022} 
in our terminology. 

\begin{thm}[{\cite[Theorem 3.15, p.~11]{budde_wegner2022}}]\label{thm:lumer_phillips_budde_wegner}
Let $(X,\|\cdot\|,\tau)$ be a sequentially complete Saks space such that $(X,\gamma_{s})$ is complete, 
and $(A,D(A))$ a bi-densely defined, bi-dissipative operator. 
Assume that $\ran(\lambda-A)$ is bi-dense, i.e.~sequentially $\gamma$-dense, in $X$ for some $\lambda>0$. 
Then the $\gamma_{s}$-closure $(\overline{A}^{\gamma_s},D(\overline{A}^{\gamma_s}))$ generates 
a $\tau$-bi-continuous contraction semigroup on $X$.
\end{thm}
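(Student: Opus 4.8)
The plan is to reduce Theorem~\ref{thm:lumer_phillips_budde_wegner} to the already-proved Theorem~\ref{thm:lumer_phillips_mixed_type} by translating the hypotheses from the $\gamma_s$-world into the $\gamma_s$-version of the mixed-type generation theorem. First I would fix a norming directed system $\Gamma_\tau$ witnessing bi-dissipativity of $(A,D(A))$, so that $(A,D(A))$ is $\Gamma_\tau$-dissipative. The key observation, already recorded in \prettyref{rem:bi_dissip_submixed}, is that bi-dissipativity upgrades to $\Gamma_{\gamma_s}$-dissipativity for the norming system $\Gamma_{\gamma_s}$ generating the submixed topology $\gamma_s$ from \prettyref{rem:submixed_norming}. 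Thus the operator is $\Gamma_{\gamma_s}$-dissipative with respect to a \emph{norming} system, which is exactly the dissipativity input needed to run the mixed-type argument with $\gamma$ replaced by $\gamma_s$.

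Next I would set up the Saks space $(X,\|\cdot\|,\gamma_s)$ and verify that \prettyref{thm:lumer_phillips_mixed_type} applies to it. By \prettyref{rem:mixed_norming} (and the fact that $(X,\gamma_s)$ is assumed complete) the triple $(X,\|\cdot\|,\gamma_s)$ is itself a complete Saks space whose associated mixed topology is again $\gamma_s$, using that $\gamma_s$ has the same convergent sequences as $\gamma$ by \prettyref{rem:mixed=submixed}~(a). The density hypotheses must also be transferred: $\gamma$-denseness and $\gamma_s$-denseness of domains and ranges agree on the level of sequences, and bi-denseness of $D(A)$ together with sequential $\gamma$-denseness of $\ran(\lambda-A)$ give, via \prettyref{rem:mixed_closed_densely_defined} and \prettyref{rem:mixed=submixed}~(a), the $\gamma_s$-dense domain and $\gamma_s$-dense range required as input. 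With these translations in place, \prettyref{thm:lumer_phillips_mixed_type} (applied with $\gamma$ there being $\gamma_s$ here, so that automatically $\gamma=\gamma_s$) yields that the $\gamma_s$-closure $(\overline{A}^{\gamma_s},D(\overline{A}^{\gamma_s}))$ generates a $\gamma_s$-strongly continuous, $\gamma_s$-equicontinuous contraction semigroup $(T(t))_{t\geq 0}$ on $X$.

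Finally I would pass from this $\gamma_s$-generation statement back to $\tau$-bi-continuity. Here \prettyref{rem:sequentially_mixed_bi_cont}~(b), in the form that permits replacing $\gamma$ by any Hausdorff locally convex topology with the same convergent sequences (explicitly the case of $\gamma_s$ via \cite[Proposition A.1.3, p.~73]{farkas2003}), shows that a $\gamma_s$-strongly continuous, locally $\gamma_s$-equicontinuous semigroup is precisely a $\tau$-bi-continuous semigroup. The contraction property is preserved since $\|\cdot\|$ is unaffected by the choice between $\gamma$ and $\gamma_s$, and the generator computed in the bi-continuous sense coincides with the $\gamma_s$-generator by the same coincidence-of-generators argument used in the proof of \prettyref{thm:lumer_phillips_mixed_type}~(b).

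The main obstacle I anticipate is bookkeeping the distinction between sequential and net-theoretic notions: the hypotheses of Theorem~\ref{thm:lumer_phillips_budde_wegner} are phrased sequentially (bi-dense, sequentially $\gamma$-dense), while \prettyref{thm:lumer_phillips_mixed_type} and the underlying \cite[Theorem 3.14, p.~929]{albanese2016} are phrased in terms of (net) $\gamma_s$-denseness and the $\gamma_s$-closure. The crux is therefore to justify that, on a (sequentially) complete Saks space, the relevant sequential density and closure notions for $\gamma_s$ coincide with — or are at least strong enough to feed — the net-based hypotheses, which rests on \prettyref{rem:mixed=submixed}~(a) and \prettyref{rem:mixed_closed_densely_defined}; everything else is a routine matching of hypotheses.
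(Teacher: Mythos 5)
Your reduction breaks at its central claim: that $(X,\|\cdot\|,\gamma_s)$ is a complete Saks space \emph{whose associated mixed topology is again $\gamma_s$}. This is false in general. Since $\tau\subseteq\gamma_s\subseteq\gamma$ and $\gamma$ coincides with $\tau$ on $\|\cdot\|$-bounded sets, the topology $\gamma_s$ also coincides with $\tau$ on $\|\cdot\|$-bounded sets; hence a linear topology between $\gamma_s$ and $\tau_{\|\cdot\|}$ coincides with $\gamma_s$ on bounded sets if and only if it lies between $\tau$ and $\tau_{\|\cdot\|}$ and coincides with $\tau$ on bounded sets. By the maximality in \prettyref{defn:mixed_top_Saks} (a) this gives $\gamma(\|\cdot\|,\gamma_s)=\gamma(\|\cdot\|,\tau)=\gamma$, \emph{not} $\gamma_s$ (unless $\gamma=\gamma_s$, which is exactly the extra hypothesis under which \prettyref{rem:gap_budde_wegner} (c) says the theorem is covered by \prettyref{thm:lumer_phillips_mixed_type}). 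The fact that $\gamma_s$ and $\gamma$ have the same convergent sequences does not make them equal, and your parenthetical ``so that automatically $\gamma=\gamma_s$'' assumes what has to be proved. Consequently, applying \prettyref{thm:lumer_phillips_mixed_type} to the triple $(X,\|\cdot\|,\gamma_s)$ would require (i) completeness of $(X,\gamma(\|\cdot\|,\gamma_s))=(X,\gamma)$, which is not among the hypotheses (only $(X,\gamma_s)$ is assumed complete, and neither completeness implies the other in general, cf.\ \prettyref{rem:gap_budde_wegner} (b)), and (ii) $\Gamma_{\gamma}$-dissipativity, whereas bi-dissipativity only upgrades to $\Gamma_{\gamma_s}$-dissipativity via \prettyref{rem:bi_dissip_submixed}; moreover its conclusion would concern the $\gamma$-closure rather than the $\gamma_s$-closure appearing in the statement.

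The route that works (and is the one the paper takes in \prettyref{rem:gap_budde_wegner} (a)) avoids \prettyref{thm:lumer_phillips_mixed_type} entirely for this statement. The generation part is obtained by running the Lumer--Phillips theorem of \cite[Theorem 3.14, p.~929]{albanese2016} directly on the complete locally convex space $(X,\gamma_s)$ -- this is where the assumed completeness of $(X,\gamma_s)$ enters, and no mixed-topology construction on top of $\gamma_s$ is needed -- yielding that $(\overline{A}^{\gamma_s},D(\overline{A}^{\gamma_s}))$ generates a $\gamma_s$-strongly continuous, $\gamma_s$-equicontinuous semigroup, which is $\tau$-bi-continuous by \prettyref{rem:sequentially_mixed_bi_cont} (b) and \prettyref{rem:mixed=submixed} (a). The contraction property is then \emph{not} inherited from a norming statement about $\gamma$; instead one checks that the closure is $\Gamma_{\gamma_s}$-dissipative for the norming system of \prettyref{rem:submixed_norming} (via \prettyref{rem:bi_dissip_submixed} and \cite[Proposition 3.13, p.~929]{albanese2016}, which also gives surjectivity of $\lambda-\overline{A}^{\gamma_s}$), identifies the $\gamma_s$-generator with the bi-continuous generator so that it is bi-densely defined, and applies \prettyref{thm:gen_bi_cont_contraction} with $\upsilon=\gamma_s$. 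Your final paragraph does contain the correct bi-continuity and generator-identification steps, but without repairing the middle step along these lines the proposal does not prove the theorem.
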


\begin{rem}\label{rem:gap_budde_wegner}
\begin{enumerate}
\item First, it is actually shown in the proof of \prettyref{thm:lumer_phillips_budde_wegner} 
that $(\overline{A}^{\gamma_s},D(\overline{A}^{\gamma_s}))$ generates a $\gamma_{s}$-strongly continuous, 
$\gamma_{s}$-equicontinuous semigroup on $X$, in particular, a $\tau$-bi-continuous semigroup by \prettyref{rem:sequentially_mixed_bi_cont} and \prettyref{rem:mixed=submixed} (a). 
However, the proof that the generated semigroup is contractive is missing. 
In this proof it is used that a bi-dissipative operator is 
$(\vertiii{\cdot}_{(p_n,a_n)_{n\in\N}})_{(p_n,a_n)_{n\in\N}\in\mathcal{N}}$-dissipative 
as well (see \prettyref{rem:bi_dissip_submixed}).
In order to prove that the generated semigroup is also contractive, the only available tool in \cite{budde_wegner2022}
is \cite[Theorem 3.6, p.~6]{budde_wegner2022}. However, to apply the latter theorem one has to show that 
$(\overline{A}^{\gamma_s},D(\overline{A}^{\gamma_s}))$ is also bi-dissipative. 
Due to \cite[Proposition 3.13, p.~929]{albanese2016} we only know that $(\overline{A}^{\gamma_s},D(\overline{A}^{\gamma_s}))$ is 
$(\vertiii{\cdot}_{(p_n,a_n)_{n\in\N}})_{(p_n,a_n)_{n\in\N}\in\mathcal{N}}$-dissipative 
(and $\lambda-\overline{A}^{\gamma_s}$ is surjective for all $\lambda>0$). 
To circumvent this obstacle, we relaxed \cite[Theorem 3.6, p.~6]{budde_wegner2022} to 
\prettyref{thm:gen_bi_cont_contraction} where one has several possible choices for the topology $\upsilon$, not only 
$\upsilon=\tau$ as in \cite[Theorem 3.6, p.~6]{budde_wegner2022}. Using \prettyref{rem:bi_dissip_submixed} and 
\cite[Proposition 3.13, p.~929]{albanese2016}, we see 
that $(\overline{A}^{\gamma_s},D(\overline{A}^{\gamma_s}))$ is $\Gamma_{\gamma_s}$-dissipative w.r.t~the norming directed system of 
continuous seminorms $\Gamma_{\gamma_s}$ from \prettyref{rem:submixed_norming}. 
Now, it is possible to apply \prettyref{thm:gen_bi_cont_contraction} 
with $\upsilon=\gamma_s$ to conclude that the generated semigroup is contractive. This closes the gap in the proof of
\prettyref{thm:lumer_phillips_budde_wegner}. 
\item There is no nice characterisation (known to us) of the completeness of $(X,\gamma_{s})$, that is assumed in \prettyref{thm:lumer_phillips_budde_wegner}. However, there is a nice characterisation of the completeness of the Saks space 
$(X,\|\cdot\|,\tau)$. By definition the Saks space is complete if and only if 
$(X,\gamma)$ is complete. The space $(X,\gamma)$ is complete if and only if $B_{\|\cdot\|}=\{x\in X\;|\; \|x\|\leq 1\}$ 
is $\tau$-complete by \cite[I.1.14 Proposition, p.~11]{cooper1978}. But, since $\gamma_s$ is in general a weaker topology than 
$\gamma$ by \prettyref{rem:mixed=submixed} (a), the completeness of $(X,\gamma)$ does in general not imply the completeness of 
$(X,\gamma_s)$.  
\item Let us suppose that $\gamma=\gamma_s$. Then \prettyref{thm:lumer_phillips_budde_wegner} 
is covered by \prettyref{thm:lumer_phillips_mixed_type} (a) and (b). Further, we point out that 
in comparison to \prettyref{thm:lumer_phillips_budde_wegner} 
we weakened the assumptions from $(A,D(A))$ being a bi-densely defined, bi-dissipative operator 
and $\ran(\lambda-A)$ being bi-dense for some $\lambda>0$ 
to $(A,D(A))$ being a $\gamma$-densely defined, $\Gamma_{\gamma}$-dissipative operator 
and $\ran(\lambda-A)$ being $\gamma$-dense for some $\lambda>0$ (see \prettyref{rem:mixed_closed_densely_defined}) 
in \prettyref{thm:lumer_phillips_mixed_type}. 
\end{enumerate}
\end{rem}

Let us take a closer look at the completeness assumption on the Saks space $(X,\|\cdot\|,\tau)$ in 
\prettyref{thm:lumer_phillips_mixed_type}, which is actually fulfilled for many important examples, 
and its characterisation in \prettyref{rem:gap_budde_wegner} (b). 
Especially, $(X,\gamma)$ is complete, thus $(X,\|\cdot\|,\tau)$ as well, 
if $B_{\|\cdot\|}$ is $\tau$-compact, which is condition (ii) of \prettyref{rem:mixed=submixed} (b) and also a 
sufficient condition for $\gamma=\gamma_{s}$.
We recall the following observations from \cite[Examples 2.4, 3.11, p.~4--5, 10]{kruse_schwenninger2022}, 
\cite[Remark 3.20 (a), p.~15]{kruse_schwenninger2022}, \cite[Example 4.12, p.~24--25]{kruse_schwenninger2022} and 
\cite[Corollary 3.23, p.~17]{kruse_schwenninger2022}, and add a proof of the completeness of the Saks spaces considered 
in \prettyref{rem:complete_saks} (c), (d) and (f) below.

\begin{rem}\label{rem:complete_saks}
\begin{enumerate}
\item Let $\Omega$ be a Hausdorff $k_{\R}$-space and recall that a completely regular space $\Omega$ is called \emph{$k_{\R}$-space} 
if any map $f\colon\Omega\to\R$ whose restriction to each compact $K\subset\Omega$ is continuous, is already continuous on $\Omega$ 
(see \cite[p.~487]{michael1973}). 
Further, let $\mathrm{C}_{\operatorname{b}}(\Omega)$ be the space of bounded continuous 
functions on $\Omega$, and $\|\cdot\|_{\infty}$ the sup-norm as well as $\tau_{\operatorname{co}}$ the compact-open topology, i.e.~the topology of uniform convergence on compact subsets of $\Omega$. 
Then $(\mathrm{C}_{\operatorname{b}}(\Omega),\|\cdot\|_{\infty},\tau_{\operatorname{co}})$ is a complete Saks space 
and $\gamma(\|\cdot\|_{\infty},\tau_{\operatorname{co}})=\gamma_{s}(\|\cdot\|_{\infty},\tau_{\operatorname{co}})$.

Let $\mathcal{V}$ denote the set of all non-negative bounded functions $\nu$ on $\Omega$ 
that vanish at infinity, i.e.~for every $\varepsilon>0$ the set $\{x\in\Omega\;|\;\nu(x)\geq\varepsilon\}$ is compact. 
Let $\beta_{0}$ be the Hausdorff locally convex topology on $\mathrm{C}_{\operatorname{b}}(\Omega)$ that is induced 
by the seminorms 
\[
|f|_{\nu}\coloneqq\sup_{x\in\Omega}|f(x)|\nu(x),\quad f\in\mathrm{C}_{\operatorname{b}}(\Omega),
\]
for $\nu\in\mathcal{V}$. Then we have $\gamma(\|\cdot\|_{\infty},\tau_{\operatorname{co}})=\beta_{0}$. 
If $\Omega$ is locally compact, then $\mathcal{V}$ may be replaced by the functions 
in $\mathrm{C}_{0}(\Omega)$ that are non-negative where 
$\mathrm{C}_{0}(\Omega)$ is the space of real-valued continuous functions on $\Omega$ that vanish at infinity.

If $\Omega$ is a hemicompact Hausdorff $k_{\R}$-space 
or a Polish space, then we even have
\[
 \gamma(\|\cdot\|_{\infty},\tau_{\operatorname{co}})
=\beta_{0}=\mu(\mathrm{C}_{\operatorname{b}}(\Omega),\mathrm{M}_{\operatorname{t}}(\Omega))
\]
where $\mathrm{M}_{\operatorname{t}}(\Omega)=(\mathrm{C}_{\operatorname{b}}(\Omega),\beta_{0})'$ 
is the space of bounded Radon measures and $\mu(\mathrm{C}_{\operatorname{b}}(\Omega),\mathrm{M}_{\operatorname{t}}(\Omega))$ 
the Mackey-topology of the dual pair $(\mathrm{C}_{\operatorname{b}}(\Omega),\mathrm{M}_{\operatorname{t}}(\Omega))$.

\item Let $(X,\|\cdot\|)$ be a Banach space and $\sigma^{\ast}\coloneqq \sigma(X',X)$ the weak$^{\ast}$-topology.
Then condition (ii) of \prettyref{rem:mixed=submixed} (b) is fulfilled, 
$(X',\|\cdot\|_{X'},\sigma^{\ast})$ is a complete Saks space and 
$\gamma(\|\cdot\|_{X'},\sigma^{\ast})=\gamma_{s}(\|\cdot\|_{\infty},\sigma^{\ast})=\tau_{\operatorname{c}}(X',X)$ 
where $\tau_{\operatorname{c}}(X',X)$ is the topology of uniform convergence on compact subsets of $X$.
\item Let $(X,\|\cdot\|)$ be a Banach space and $\mu^{\ast}\coloneqq\mu(X',X)$ the dual Mackey-topology.
Then $(X',\|\cdot\|_{X'},\mu^{\ast})$ is a complete Saks space, where the completeness follows from 
\cite[p.~74]{kalton1973}, and $\gamma(\|\cdot\|_{X'},\mu^{\ast})=\mu^{\ast}$. 
If $X$ is a \emph{Schur space}, i.e.~every $\sigma(X,X')$-convergent sequence is $\|\cdot\|$-convergent 
(see \cite[p.~253]{fabian2011}), then condition (ii) of \prettyref{rem:mixed=submixed} (b) is fulfilled and 
$\gamma(\|\cdot\|_{X'},\mu^{\ast})=\gamma_{s}(\|\cdot\|_{X'},\mu^{\ast})$.
\item Let $(X,\|\cdot\|_{X})$ and $(Y,\|\cdot\|_{Y})$ be Banach spaces, and $\tau_{\operatorname{sot}}$
the strong operator topology on $\mathcal{L}(X;Y)$. 
Then $(\mathcal{L}(X;Y),\|\cdot\|_{\mathcal{L}(X;Y)},\tau_{\operatorname{sot}})$ is a Saks space. 
Let $(T_{i})_{i\in I}$ be a $\tau_{\operatorname{sot}}$-Cauchy net in 
$B_{\|\cdot\|_{\mathcal{L}(X;Y)}}=\{T\in\mathcal{L}(X;Y)\;|\; \|T\|_{\mathcal{L}(X;Y)}\leq 1\}$. 
Then for each $x\in X$ the net $(T_{i}x)_{i\in I}$ is $\|\cdot\|_{Y}$-convergent to some $Tx\in Y$ with 
$\|Tx\|_{Y}\leq \|x\|_{X}$ in the Banach space $(Y,\|\cdot\|_{Y})$. Thus the map $T\colon x\mapsto Tx$ belongs to 
$\mathcal{L}(X;Y)$ with $\|T\|_{\mathcal{L}(X;Y)}\leq 1$ and $(T_{i})_{i\in I}$ is $\tau_{\operatorname{sot}}$-convergent to $T$. 
Hence $B_{\|\cdot\|_{\mathcal{L}(X;Y)}}$ is $\tau_{\operatorname{sot}}$-complete and 
so $(\mathcal{L}(X;Y),\|\cdot\|_{\mathcal{L}(X;Y)},\tau_{\operatorname{sot}})$ is complete.
If $Y$ is in addition finite-dimensional, then condition (ii) of \prettyref{rem:mixed=submixed} (b) is fulfilled and 
$\gamma(\|\cdot\|_{\mathcal{L}(X;Y)},\tau_{\operatorname{sot}})=\gamma_{s}(\|\cdot\|_{\mathcal{L}(X;Y)},\tau_{\operatorname{sot}})$.
\item Let $H$ be a separable Hilbert space and $\mathcal{N}(H)$ the space of trace class operators 
in $\mathcal{L}(H)$ and note that $\mathcal{L}(H)=\mathcal{N}(H)'$.  
Let $\tau_{\operatorname{sot}^{\ast}}$ be the symmetric
strong operator topology, i.e.~the Hausdorff locally convex topology on $\mathcal{L}(H)$ 
generated by the directed system of seminorms
\[
p_{N}(R)\coloneqq\max\bigl(\sup_{x\in N}\|Rx\|_{H},\sup_{x\in N}\|R^{\ast}x\|_{H}\bigr),\quad R\in \mathcal{L}(H),
\]
for finite $N\subset H$ where $R^{\ast}$ is the adjoint of $R$. 
We denote by $\beta_{\operatorname{sot}^{\ast}}$ the mixed topology 
$\gamma(\|\cdot\|_{\mathcal{L}(H)},\tau_{\operatorname{sot}^{\ast}})$. 
Then the triple $(\mathcal{L}(H),\|\cdot\|_{\mathcal{L}(H)},\tau_{\operatorname{sot}^{\ast}})$ 
is a complete Saks space and $\beta_{\operatorname{sot}^{\ast}}=\mu(\mathcal{L}(H),\mathcal{N}(H))$. 
\item Let $\Omega$ be a completely regular Hausdorff space, $\mathrm{M}_{\operatorname{t}}(\Omega)$ the space of bounded Radon 
measures on $\Omega$, and $\|\cdot\|_{\mathrm{M}_{\operatorname{t}}(\Omega)}$ the total variation norm on 
$\mathrm{M}_{\operatorname{t}}(\Omega)$. Then $(\mathrm{M}_{\operatorname{t}}(\Omega),\|\cdot\|_{\mathrm{M}_{\operatorname{t}}(\Omega)},\sigma(\mathrm{M}_{\operatorname{t}}(\Omega),\mathrm{C}_{\operatorname{b}}(\Omega)))$ is a complete Saks space 
where the completeness follows from $B_{\|\cdot\|_{\mathrm{M}_{\operatorname{t}}(\Omega)}}$ being 
$\sigma(\mathrm{M}_{\operatorname{t}}(\Omega),\mathrm{C}_{\operatorname{b}}(\Omega))$-compact
by \cite[Corollary 3.23 (a), p.~17]{kruse_schwenninger2022}, 
i.e.~condition (ii) of \prettyref{rem:mixed=submixed} (b) is fulfilled. Furthermore, we have 
\begin{align*}
\beta_{0}'\coloneqq &\gamma(\|\cdot\|_{\mathrm{M}_{\operatorname{t}}(\Omega)},\sigma(\mathrm{M}_{\operatorname{t}}(\Omega),\mathrm{C}_{\operatorname{b}}(\Omega)))
=\gamma_{s}(\|\cdot\|_{\mathrm{M}_{\operatorname{t}}(\Omega)},\sigma(\mathrm{M}_{\operatorname{t}}(\Omega),\mathrm{C}_{\operatorname{b}}(\Omega)))\\
=&\tau_{\operatorname{c}}(\mathrm{M}_{\operatorname{t}}(\Omega),(\mathrm{C}_{\operatorname{b}}(\Omega),\|\cdot\|_{\infty})).
\end{align*}
\end{enumerate}
\end{rem}

Let us consider a toy example for an application of \prettyref{thm:lumer_phillips_mixed_type}, namely, 
the multiplication operator on $\mathrm{C}_{\operatorname{b}}(\Omega)$, which we will revisit for other generation results.

\begin{exa}\label{ex:multiplication_complete}
Let $\Omega$ be a Hausdorff $k_{\R}$-space and $q\colon\Omega\to\C$ be continuous with 
$C\coloneqq\sup_{x\in\Omega}\re q(x)<\infty$. We define the multiplication operator $(M_{q},D(M_{q}))$ by setting 
\[
D(M_{q})\coloneqq\{f\in\mathrm{C}_{\operatorname{b}}(\Omega)\;|\;qf\in\mathrm{C}_{\operatorname{b}}(\Omega)\}
\]
and $M_{q}\coloneqq qf$ for $f\in D(M_{q})$. By solving the equation $(\lambda-q)f=g$ we can compute 
the resolvent $R(\lambda,M_{q})$ of $M_{q}$ explicitly by 
\[
R(\lambda,M_{q})f=\frac{1}{\lambda-q}f,\quad f\in \mathrm{C}_{\operatorname{b}}(\Omega),
\]
for all $\lambda\in(\C\setminus\overline{q(\Omega)})=\rho(M_{q})$, which shows that $\lambda-M_{q}$ is surjective, 
i.e.~$\ran(\lambda-M_{q})=\mathrm{C}_{\operatorname{b}}(\Omega)$, for all $\lambda\in\C\setminus\overline{q(\Omega)}$. 
Suppose that $C\leq 0$. Then $(0,\infty)\in \rho(M_{q})$ and $\ran(\lambda-M_{q})=\mathrm{C}_{\operatorname{b}}(\Omega)$ 
for all $\lambda>0$. Furthermore, we have for all $\lambda>0$, $f\in\mathrm{C}_{\operatorname{b}}(\Omega)$ and 
$\nu\in\mathcal{V}$ from \prettyref{rem:complete_saks} (a) that 
\begin{align*}
  |R(\lambda,M_{q})f|_{\nu}
&=\sup_{x\in\Omega}\frac{1}{|\lambda-q(x)|}|f(x)|\nu(x)
 \underset{C\leq 0}{\leq}\sup_{x\in\Omega}\frac{1}{\lambda-\re q(x)}|f(x)|\nu(x)\\
&\leq\frac{1}{\lambda}\sup_{x\in\Omega}|f(x)|\nu(x) 
 =\frac{1}{\lambda}|f|_{\nu}.
\end{align*}
Therefore $(M_{q},D(M_{q}))$ is $\Gamma_{\beta_{0}}$-dissipative for the directed system of seminorms 
$\Gamma_{\beta_{0}}\coloneqq (|\cdot|_{\nu})_{\nu\in\mathcal{V}}$ that generates the mixed topology 
$\beta_{0}=\gamma(\|\cdot\|_{\infty},\tau_{\operatorname{co}})$. Moreover, due to \prettyref{prop:lambda_A} (b)
and $\ran(\lambda-M_{q})=\mathrm{C}_{\operatorname{b}}(\Omega)$ 
for all $\lambda>0$ the operator $(M_{q},D(M_{q}))$ is $\beta_{0}$-closed and thus 
generates a $\beta_{0}$-strongly continuous, $\beta_{0}$-equicontinuous semigroup $(T(t))_{t\geq 0}$ 
on $\mathrm{C}_{\operatorname{b}}(\Omega)$ by \prettyref{thm:lumer_phillips_mixed_type} (a) and \prettyref{rem:complete_saks} (a). 
Choosing $\mathcal{V}_{1}\coloneqq\{\nu\in\mathcal{V}\;|\;\forall\;x\in\Omega:\;\nu(x)\leq 1\}$ instead of $\mathcal{V}$, 
we get a norming directed system of continuous seminorms that generates $\beta_{0}$ for which $(M_{q},D(M_{q}))$ is dissipative, too. 
Hence $(T(t))_{t\geq 0}$ is also a $(\|\cdot\|_{\infty},\tau_{\operatorname{co}})$-equitight contraction semigroup 
by \prettyref{thm:lumer_phillips_mixed_type} (b) and (c) since 
$\beta_{0}=\gamma(\|\cdot\|_{\infty},\tau_{\operatorname{co}})=\gamma_{s}(\|\cdot\|_{\infty},\tau_{\operatorname{co}})$ 
by \prettyref{rem:complete_saks} (a).
\end{exa}

Our next generation result involves the $\gamma$-dual operator. 
Let us recall some observations from \cite[Remark 4.5, p.~9]{kruse_schwenninger2022a}. 
Let $(X,\|\cdot\|,\tau)$ be a sequentially complete Saks space. 
Then $X_{\gamma}'\coloneqq (X,\gamma)'$ is a closed linear subspace of $X'$, in particular a Banach space, 
by \cite[I.1.18 Proposition, p.~15]{cooper1978}, and we denote by $\|\cdot\|_{X_{\gamma}'}$ the restriction of 
$\|\cdot\|_{X'}$ to $X_{\gamma}'$. We note that $(X,\gamma)$ is a \emph{Mazur space}, 
i.e.~$X_{\gamma}'$ coincides with the space of linear $\gamma$-sequentially continuous functionals on $X$ 
(see \cite[p.~50]{wilansky1981}), if and only if 
\[
X_{\gamma}'=\{x'\in X'\;|\;x'\;\tau\text{-sequentially continuous on } \|\cdot\|\text{-bounded sets}\}\eqqcolon X^{\circ}
\]
by \cite[I.1.10 Proposition, p.~9]{cooper1978}. 
The space $X^{\circ}$ was introduced in \cite[Proposition 2.1, p.~314]{farkas2011} in the context of dual semigroups 
of bi-continuous semigroups. 

\begin{cor}\label{cor:lumer_phillips_dual}
Let $(X,\|\cdot\|,\tau)$ be a complete Saks space. Let both $(A,D(A))$ and its $\gamma$-dual operator $(A',D(A'))$ be 
$\Gamma_{\gamma}$-dissipative and $\|\cdot\|_{X_{\gamma}'}$-dissipative operators, respectively.
Then the following assertions hold:
\begin{enumerate}
\item The $\gamma$-closure $(\overline{A},D(\overline{A}))$ generates a $\gamma$-equicontinuous, 
$\gamma$-strongly continuous semigroup $(T(t))_{t\geq 0}$ on $X$. 
\item If $\Gamma_{\gamma}$ is norming, then $(T(t))_{t\geq 0}$ is a contraction semigroup.
\item If $\Gamma_{\gamma}$ is norming and $\gamma=\gamma_s$, then $(T(t))_{t\geq 0}$ is $(\|\cdot\|,\tau)$-equitight.
\end{enumerate}  
\end{cor}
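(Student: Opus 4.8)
The plan is to deduce the range-density hypothesis of \prettyref{thm:lumer_phillips_mixed_type} from the dissipativity of the $\gamma$-dual operator and then invoke that theorem directly. Observe first that speaking of the $\gamma$-dual operator $(A',D(A'))$ presupposes that $(A,D(A))$ is $\gamma$-densely defined, so that $A'$ is well-defined on $X_{\gamma}'=(X,\gamma)'$. Since $(A',D(A'))$ is $\|\cdot\|_{X_{\gamma}'}$-dissipative, we have $\|(\lambda-A')x'\|_{X_{\gamma}'}\geq\lambda\|x'\|_{X_{\gamma}'}$ for all $\lambda>0$ and $x'\in D(A')$, whence $\lambda-A'$ is injective for every $\lambda>0$.

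The key step is then a Hahn--Banach/bipolar argument in the Hausdorff locally convex space $(X,\gamma)$: a linear subspace is $\gamma$-dense if and only if its annihilator in $(X,\gamma)'=X_{\gamma}'$ is trivial. I would compute this annihilator for $\ran(\lambda-A)$. An element $x'\in X_{\gamma}'$ satisfies $\langle(\lambda-A)x,x'\rangle=0$ for all $x\in D(A)$ exactly when $\langle Ax,x'\rangle=\langle x,\lambda x'\rangle$ for all $x\in D(A)$, which by the definition of the $\gamma$-dual operator means $x'\in D(A')$ with $(\lambda-A')x'=0$. Hence the annihilator of $\ran(\lambda-A)$ equals $\ker(\lambda-A')$, which is $\{0\}$ by the injectivity established above. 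Consequently $\ran(\lambda-A)$ is $\gamma$-dense in $X$ for every $\lambda>0$.

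At this point $(A,D(A))$ is $\gamma$-densely defined, $\Gamma_{\gamma}$-dissipative, and has $\gamma$-dense range $\ran(\lambda-A)$ for some (in fact all) $\lambda>0$, so every hypothesis of \prettyref{thm:lumer_phillips_mixed_type} is met; assertions (a)--(c) then follow verbatim from parts (a)--(c) of that theorem. The only genuinely nontrivial point is the duality step of the middle paragraph, namely the translation of $\gamma$-density of the range into injectivity of the $\gamma$-dual operator; this rests on the bipolar theorem together with the identification $(X,\gamma)'=X_{\gamma}'$ recalled before the statement of the corollary.
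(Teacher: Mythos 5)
Your proof is correct, but it takes a genuinely different route from the paper's. The paper first identifies the strong dual of $(X,\gamma)$ with the Banach space $(X_{\gamma}',\|\cdot\|_{X_{\gamma}'})$ via \cite[I.1.18 Proposition (i), p.~15]{cooper1978} and then invokes \cite[Corollary 3.17, p.~931]{albanese2016} directly --- the dual-dissipativity version of the Lumer--Phillips theorem for complete locally convex spaces --- with parts (b) and (c) obtained as in \prettyref{thm:lumer_phillips_mixed_type}; Cooper's identification is needed precisely to translate $\|\cdot\|_{X_{\gamma}'}$-dissipativity of $A'$ into dissipativity for the strong dual topology $\tau_b$ appearing in the cited result. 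You instead reduce the corollary to \prettyref{thm:lumer_phillips_mixed_type} itself: the annihilator of $\ran(\lambda-A)$ in $(X,\gamma)'=X_{\gamma}'$ is exactly $\ker(\lambda-A')$, dual dissipativity makes $\lambda-A'$ injective, and Hahn--Banach then yields $\gamma$-density of $\ran(\lambda-A)$; your reading that the mere mention of the $\gamma$-dual operator presupposes $\gamma$-dense definedness of $A$ is also the intended one, and it guarantees that $A'$ is single-valued. This is the locally convex analogue of the classical Banach-space argument for densely defined dissipative operators with dissipative adjoints. Comparing the two: the paper's proof is shorter and exhibits the corollary as a direct instance of Albanese--Jornet's result, while yours is self-contained modulo \prettyref{thm:lumer_phillips_mixed_type}, avoids both external references, and moreover shows that the hypothesis on $A'$ can be weakened to injectivity of $\lambda-A'$ for a single $\lambda>0$. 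At bottom the arguments are the same, since your annihilator computation is the standard way such dual-dissipativity corollaries are deduced from the range-density theorem, but you carry it out inside the paper rather than citing it.
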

\begin{proof}
By \cite[I.1.18 Proposition (i), p.~15]{cooper1978} we have $(X_{\gamma}',\tau_{b})=(X_{\gamma}',\|\cdot\|_{X_{\gamma}'})$ 
where $\tau_{b}$ denotes the topology of uniform convergence on $\gamma$-bounded sets.
Due to \cite[Corollary 3.17, p.~931]{albanese2016} we get that $(\overline{A},D(\overline{A}))$ generates a 
$\gamma$-strongly continuous, $\gamma$-equicontinuous semigroup on $X$. 
Parts (b) and (c) follow as in \prettyref{thm:lumer_phillips_mixed_type}.
\end{proof}

\begin{exa}\label{ex:multiplication_dual}
Let $\Omega\coloneqq\N$ be equipped with the metric induced by the absolute value. Then $\Omega$ is a Polish space, 
in particular, a Hausdorff $k_{\R}$-space. Moreover, $\mathrm{C}_{\operatorname{b}}(\N)=\ell^{\infty}$ 
and $\mathrm{M}_{\operatorname{t}}(\N)=\ell^{1}$ (see e.g.~\cite[p.~477]{conway1967}). 
It follows from \prettyref{rem:complete_saks} (a) that 
$\beta_{0}=\mu(\ell^{\infty},\ell^{1})$ and so 
\[
(\ell^{\infty},\beta_{0})'=(\ell^{\infty},\mu(\ell^{\infty},\ell^{1}))'=\ell^{1}.
\]
Let $q\colon\N\to\C$ be a function with $C\coloneqq\sup_{n\in\N}\re q(n)\leq 0$. 
Again, we consider the multiplication operator $M_{q}$ from \prettyref{ex:multiplication_complete}, i.e.
\[
D(M_{q})=\{f\in\ell^{\infty}\;|\;qf\in\ell^{\infty}\}
\]
and $M_{q}= qf$ for $f\in D(M_{q})$. We already know that $(M_{q},D(M_{q}))$ is $\Gamma_{\beta_{0}}$-dissipative 
with $\Gamma_{\beta_{0}}$ from \prettyref{ex:multiplication_complete}. Furthermore, we have for the $\beta_{0}$-dual 
operator $(M_{q}',D(M_{q}'))$ that 
\[
D(M_{q}')=\{f\in\ell^{1}\;|\;qf\in\ell^{1}\}
\]
and $M_{q}'=qf$ for $f\in D(M_{q}')$. For all $\lambda>0$ and $f\in D(M_{q}')$ we get 
\[
 \|(\lambda-M_{q}')f\|_{\ell^{1}}
=\sum_{n=1}^{\infty}|(\lambda-q(n))f_{n}|
\underset{C\leq 0}{\geq}\sum_{n=1}^{\infty}(\lambda-\re q(n))|f_{n}|
\geq \lambda \sum_{n=1}^{\infty}|f_{n}|
=\lambda\|f\|_{\ell^{1}},
\]
meaning that $(M_{q}',D(M_{q}'))$ is $\|\cdot\|_{\ell^{1}}$-dissipative. Thus we may apply 
\prettyref{cor:lumer_phillips_dual} (a) to deduce that $(M_{q},D(M_{q}))$ generates a 
$\mu(\ell^{\infty},\ell^{1})$-strongly continuous, $\mu(\ell^{\infty},\ell^{1})$-equicontinuous semigroup on $\ell^{\infty}$.
\end{exa}

Instead of \prettyref{rem:complete_saks} (a) we may also use \prettyref{rem:complete_saks} (c) in 
\prettyref{ex:multiplication_dual} since $\ell^{1}$ is a Schur space by \cite[Theorem 5.36, p.~252]{fabian2011}.

Next, we would like to transfer \cite[Theorem 3.18, p.~931]{albanese2016} to the setting of Saks spaces $(X,\|\cdot\|,\tau)$. 
However, looking at the assumptions of \cite[Theorem 3.18, p.~931]{albanese2016}, we see that this requires $(X,\gamma)$ to 
be reflexive. Since reflexive spaces are barrelled, this requirement implies that $\tau=\gamma=\tau_{\|\cdot\|}$ 
by \cite[I.1.15 Proposition, p.~12]{cooper1978} and so we are in an uninteresting situation from the perspective of 
$\tau$-bi-continuous semigroups. But if we could relax the assumption to $(X,\gamma)$ being semi-reflexive, then 
there are non-trivial (i.e.~not Banach) Saks spaces. This is actually possible by the following observation.

\begin{rem}\label{rem:semireflexive}
\cite[Theorem 3.18, p.~931]{albanese2016} is stated for reflexive Hausdorff locally convex spaces $(X,\upsilon)$. 
However, a closer look at its proof reveals that it is actually valid for semi-reflexive $(X,\upsilon)$ because the only part 
where reflexivity comes into play is that it implies that a $\upsilon$-bounded set $B\subset X$ is relatively 
$\sigma(X,(X,\upsilon)')$-compact; see \cite[p.~931, l.~9--10 from below]{albanese2016}. 
But the latter assertion is equivalent to semi-reflexivity by \cite[Proposition 23.18, p.~270]{meisevogt1997}.
\end{rem}

\begin{thm}\label{thm:lumer_phillips_semi_reflexive}
Let $(X,\|\cdot\|,\tau)$ be a complete, semi-reflexive Saks space, $(A,D(A))$ a $\gamma$-densely defined, 
$\Gamma_{\gamma}$-dissipative operator and $\ran(\lambda-A)=X$ for some $\lambda>0$.
Then the following assertions hold:
\begin{enumerate}
\item $(A,D(A))$ generates a $\gamma$-equicontinuous, $\gamma$-strongly continuous semigroup $(T(t))_{t\geq 0}$ on $X$. 
\item If $\Gamma_{\gamma}$ is norming, then $(T(t))_{t\geq 0}$ is a contraction semigroup.
\item If $\Gamma_{\gamma}$ is norming and $\gamma=\gamma_s$, then $(T(t))_{t\geq 0}$ is $(\|\cdot\|,\tau)$-equitight.
\end{enumerate}  
\end{thm}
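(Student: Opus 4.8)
The plan is to mirror the proof of \prettyref{thm:lumer_phillips_mixed_type}, but to replace the generation result \cite[Theorem 3.14, p.~929]{albanese2016} (which only yields generation by the $\gamma$-closure under a $\gamma$-dense-range condition) by \cite[Theorem 3.18, p.~931]{albanese2016}, which under the stronger surjectivity assumption $\ran(\lambda-A)=X$ yields that $(A,D(A))$ \emph{itself} is the generator. The price for this improvement is precisely the additional semi-reflexivity hypothesis.

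For part (a), I would apply \cite[Theorem 3.18, p.~931]{albanese2016} with $\upsilon=\gamma$. The required hypotheses are met: $(X,\gamma)$ is complete because $(X,\|\cdot\|,\tau)$ is a complete Saks space; $(A,D(A))$ is $\gamma$-densely defined and $\Gamma_{\gamma}$-dissipative by assumption; and $\ran(\lambda-A)=X$ for some $\lambda>0$. The only nonstandard point is that \cite[Theorem 3.18]{albanese2016} is phrased for \emph{reflexive} $(X,\gamma)$, whereas we only assume semi-reflexivity of the Saks space, i.e.~semi-reflexivity of $(X,\gamma)$. This gap is closed by \prettyref{rem:semireflexive}, which records that the proof in \cite{albanese2016} uses reflexivity only to guarantee that $\gamma$-bounded sets are relatively $\sigma(X,(X,\gamma)')$-compact, a property equivalent to semi-reflexivity. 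Hence $(A,D(A))$ generates a $\gamma$-equicontinuous, $\gamma$-strongly continuous semigroup $(T(t))_{t\geq 0}$ on $X$.

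Parts (b) and (c) then follow essentially verbatim as in \prettyref{thm:lumer_phillips_mixed_type}. For (b), by part (a) and \prettyref{rem:sequentially_mixed_bi_cont} the semigroup $(T(t))_{t\geq 0}$ is also $\tau$-bi-continuous, and by \cite[p.~5]{kruse_schwenninger2022a} its generator as a $\tau$-bi-continuous semigroup coincides with $(A,D(A))$; thus $A$ is bi-densely defined by \prettyref{prop:generator}~(b). Since $\ran(\lambda-A)=X$, the surjectivity condition (b) of \prettyref{thm:gen_bi_cont_contraction} holds, and applying that theorem with $\upsilon=\gamma$ and the norming system $\Gamma_{\gamma}$ shows that $(T(t))_{t\geq 0}$ is a contraction semigroup. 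For (c), the contraction property from (b), the $\gamma$-equicontinuity from (a), and the hypothesis $\gamma=\gamma_s$ allow one to invoke \cite[Proposition 3.16, p.~12--13]{kruse_schwenninger2022} to conclude $(\|\cdot\|,\tau)$-equitightness.

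The main obstacle is conceptual rather than computational: it lies entirely in justifying the semi-reflexive extension of \cite[Theorem 3.18]{albanese2016}. Once \prettyref{rem:semireflexive} secures that $\upsilon=\gamma$ is admissible under mere semi-reflexivity, the remainder is a direct transcription of the argument already carried out for \prettyref{thm:lumer_phillips_mixed_type}, with the cosmetic simplification that $A$ is now its own generator rather than its $\gamma$-closure.
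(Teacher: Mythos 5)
Your proposal is correct and takes essentially the same route as the paper: part (a) by applying \cite[Theorem 3.18, p.~931]{albanese2016} with $\upsilon=\gamma$, justified under mere semi-reflexivity via \prettyref{rem:semireflexive}, and parts (b) and (c) exactly as in \prettyref{thm:lumer_phillips_mixed_type}. The only detail the paper adds that you left implicit is an appeal to \prettyref{prop:lambda_A}~(d) to pass from surjectivity of $\lambda-A$ for \emph{some} $\lambda>0$ to surjectivity of $1-A$, which is what \cite[Theorem 3.18]{albanese2016} actually requires; this normalization is immediate from $\Gamma_{\gamma}$-dissipativity.
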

\begin{proof}
Part (a) follows from \cite[Theorem 3.18, p.~931]{albanese2016} (noting that $1-A$ is surjective by \prettyref{prop:lambda_A} (d)) and \prettyref{rem:semireflexive}. 
Parts (b) and (c) follow as in \prettyref{thm:lumer_phillips_mixed_type}.
\end{proof}

Let $(X,\|\cdot\|,\tau)$ be a Saks space. By definition the Saks space is semi-reflexive if and only if 
$(X,\gamma)$ is semi-reflexive.
The space $(X,\gamma)$ is semi-reflexive if and only if $B_{\|\cdot\|}=\{x\in X\;|\; \|x\|\leq 1\}$ 
is $\sigma(X,(X,\tau)')$-compact by \cite[I.1.21 Corollary, p.~16]{cooper1978}. Due to 
\cite[I.1.20 Proposition, p.~16]{cooper1978}, $B_{\|\cdot\|}$ is $\sigma(X,(X,\tau)')$-compact if and only if 
it is $\sigma(X,(X,\gamma)')$-compact. Further, $(X,\gamma)$ is a semi-Montel space, 
thus semi-reflexive, if and only if $B_{\|\cdot\|}$ is $\tau$-compact by 
\cite[I.1.13 Proposition, p.~11]{cooper1978} which is condition (ii) in \prettyref{rem:mixed=submixed} (b) again 
and also a sufficient condition for $\gamma=\gamma_{s}$. Therefore we have by \prettyref{rem:complete_saks} 
the following observations where we only have to add an additional argument in parts (a), (c) and (e) of 
\prettyref{rem:semi_reflexive_saks} below.

\begin{rem}\label{rem:semi_reflexive_saks}  
\begin{enumerate}
\item Let $\Omega$ be a discrete space. 
Then $(\mathrm{C}_{\operatorname{b}}(\Omega),\|\cdot\|_{\infty},\tau_{\operatorname{co}})$ is a complete, semi-reflexive Saks space 
by \cite[II.1.24 Remark 4), p.~88--89]{cooper1978}.
\item Let $(X,\|\cdot\|)$ be a Banach space.
Then $(X',\|\cdot\|_{X'},\sigma^{\ast})$ is a complete, semi-reflexive Saks space.
\item Let $(X,\|\cdot\|)$ be a Banach space. Then $(X',\|\cdot\|_{X'},\mu^{\ast})$ is a complete, semi-reflexive Saks space 
where the semi-reflexivity follows from $(X',\mu^{\ast})''=X'$ by the Mackey--Arens theorem.
\item Let $(X,\|\cdot\|_{X})$ and $(Y,\|\cdot\|_{Y})$ be Banach spaces and $Y$ finite-dimensional.
Then $(\mathcal{L}(X;Y),\|\cdot\|_{\mathcal{L}(X;Y)},\tau_{\operatorname{sot}})$ is a complete, semi-reflexive Saks space.
\item Let $H$ be a separable Hilbert space. 
Then $(\mathcal{L}(H),\|\cdot\|_{\mathcal{L}(H)},\tau_{\operatorname{sot}^{\ast}})$ 
is a complete, semi-reflexive Saks space where the semi-reflexivity follows from 
$(\mathcal{L}(H),\beta_{\operatorname{sot}^{\ast}})''=\mathcal{N}(H)'=\mathcal{L}(H)$. 
\item Let $\Omega$ be a completely regular Hausdorff space. 
Then we have that the triple $(\mathrm{M}_{\operatorname{t}}(\Omega),\|\cdot\|_{\mathrm{M}_{\operatorname{t}}(\Omega)},\sigma(\mathrm{M}_{\operatorname{t}}(\Omega),\mathrm{C}_{\operatorname{b}}(\Omega)))$ is a complete, semi-reflexive Saks space.
\end{enumerate}
\end{rem}

\begin{exa}\label{ex:multiplication_semi_reflexive} 
Due to \prettyref{ex:multiplication_dual} and \prettyref{rem:semi_reflexive_saks} (a) 
$(\ell^{\infty},\|\cdot\|_{\infty},\tau_{\operatorname{co}})$ is a complete, semi-reflexive Saks space. 
Therefore we may also apply \prettyref{thm:lumer_phillips_semi_reflexive} (a) 
to prove that the multiplication operator $(M_{q},D(M_{q}))$ with $\sup_{n\in\N}\re q(n)\leq 0$ generates a 
$\mu(\ell^{\infty},\ell^{1})$-strongly continuous, $\mu(\ell^{\infty},\ell^{1})$-equicontinuous semigroup on $\ell^{\infty}$ 
(we already checked in \prettyref{ex:multiplication_complete} that the other assumptions of 
\prettyref{thm:lumer_phillips_semi_reflexive} are satisfied).
\end{exa}

We close this section with a characterisation of the bi-continuous semigroups with dissipative generators. 
First, we start with a refinement of \prettyref{thm:gen_bi_cont_contraction}. 
In the case $\upsilon=\tau$ this was already done in \cite[Proposition 3.11, p.~9]{budde_wegner2022} whose prove needs 
some adaptations in the case of more general Hausdorff locally convex topologies $\upsilon$ with 
$\tau\subseteq\upsilon\subseteq\tau_{\|\cdot\|}$ for sequentially complete Saks spaces $(X,\|\cdot\|,\tau)$.

\begin{prop}\label{prop:gen_bi_cont_contraction}
Let $(X,\|\cdot\|,\tau)$ be a sequentially complete Saks space, $\upsilon$ a Hausdorff locally convex topology on $X$ with 
$\tau\subseteq\upsilon\subseteq\tau_{\|\cdot\|}$ such that $\gamma$-convergent sequences are $\upsilon$-convergent, and 
$(A,D(A))$ bi-densely defined.
Then the following assertions are equivalent:
\begin{enumerate}
\item $(A,D(A))$ generates a $\tau$-bi-continuous contraction semigroup $(T(t))_{t\geq 0}$ on $X$ and there exists a norming
directed system of continuous seminorms $\Gamma_{\upsilon}$ that generates $\upsilon$ such that 
$p(T(t)x)\leq p(x)$ for all $t\geq 0$, $p\in\Gamma_{\upsilon}$ and $x\in X$.
\item $\lambda-A$ is surjective for some $\lambda>0$ and $(A,D(A))$ is a $\Gamma_{\upsilon}$-dissipative operator on $X$ for some
norming directed system of continuous seminorms $\Gamma_{\upsilon}$ that generates $\upsilon$.
\end{enumerate}
\end{prop}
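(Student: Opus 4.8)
The plan is to derive both implications from \prettyref{thm:gen_bi_cont_contraction} together with a single recurring device: every approximating object we encounter (Riemann sums, resolvent powers, or Yosida/Euler approximants) will be $\|\cdot\|$-bounded and $\tau$-convergent to its limit, hence $\gamma$-convergent by \prettyref{rem:sequentially_mixed_bi_cont} (a), hence $\upsilon$-convergent by hypothesis. Since each $p\in\Gamma_{\upsilon}$ is $\upsilon$-continuous, this lets us pass $p$ through limits that are themselves merely $\tau$-limits. This is exactly the mechanism that upgrades the $\upsilon=\tau$ argument of \cite[Proposition 3.11, p.~9]{budde_wegner2022} to general $\upsilon$.

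For (b)$\Rightarrow$(a), the hypotheses in (b) — bi-dense domain, $\Gamma_{\upsilon}$-dissipativity with norming $\Gamma_{\upsilon}$, and surjectivity of $\lambda-A$ for some $\lambda>0$ — are precisely those of \prettyref{thm:gen_bi_cont_contraction}, so $(A,D(A))$ generates a $\tau$-bi-continuous contraction semigroup $(T(t))_{t\geq 0}$. It remains to produce the seminorm contraction. By \prettyref{prop:lambda_A} (a) and (c) we have $\ran(\lambda-A)=X$ and $p(R(\lambda,A)x)\leq\tfrac1\lambda p(x)$ for all $\lambda>0$, $x\in X$, $p\in\Gamma_{\upsilon}$, so by iteration $p(R(\lambda,A)^{n}x)\leq\lambda^{-n}p(x)$. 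Choosing $\lambda=n/t$ this reads $p((\tfrac{n}{t}R(\tfrac{n}{t},A))^{n}x)\leq p(x)$. The exponential (Euler) formula $(\tfrac{n}{t}R(\tfrac{n}{t},A))^{n}x\to T(t)x$ in $\tau$ as $n\to\infty$, valid for bi-continuous semigroups (cf.~\cite{kuehnemund2003}) and with $\|\cdot\|$-bounded approximants, is by the above device a $\upsilon$-convergence, so $\upsilon$-continuity of $p$ gives $p(T(t)x)\leq p(x)$ for all $t\geq 0$, $p\in\Gamma_{\upsilon}$, $x\in X$.

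For (a)$\Rightarrow$(b), surjectivity of $\lambda-A$ for all $\lambda>0$ is immediate from the Hille--Yosida theorem for bi-continuous semigroups (\cite[Theorem 16, p.~217]{kuehnemund2003} with $\omega=0$), which yields $(0,\infty)\subseteq\rho(A)$. For $\Gamma_{\upsilon}$-dissipativity with the given norming $\Gamma_{\upsilon}$, I would use the Laplace representation $R(\lambda,A)y=\int_{0}^{\infty}\euler^{-\lambda t}T(t)y\,\d t$ (valid for bi-continuous semigroups, cf.~\cite{kuehnemund2003}), whose improper integral is a $\tau$-limit of $\|\cdot\|$-bounded Riemann sums and partial integrals, hence a $\upsilon$-limit. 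Applying the $\upsilon$-continuous, subadditive, absolutely homogeneous seminorm $p$ and the assumed estimate $p(T(t)y)\leq p(y)$ gives
\[
p(R(\lambda,A)y)\leq\int_{0}^{\infty}\euler^{-\lambda t}p(T(t)y)\,\d t\leq\int_{0}^{\infty}\euler^{-\lambda t}p(y)\,\d t=\tfrac1\lambda p(y)
\]
for all $y\in X$, $\lambda>0$. Substituting $y=(\lambda-A)x$ for $x\in D(A)$, so that $R(\lambda,A)y=x$, turns this into $\lambda p(x)\leq p((\lambda-A)x)$, which is exactly $\Gamma_{\upsilon}$-dissipativity.

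The main obstacle is not any individual estimate but the repeated interchange of the $\upsilon$-continuous seminorm $p$ with limits that are classically only $\tau$-limits: the Euler approximation in one direction and the Laplace integral (through both its Riemann sums and its improper tail) in the other. Because $p$ need not be $\tau$-continuous when $\upsilon$ is strictly finer than $\tau$, none of these interchanges is automatic; it is precisely the hypothesis that $\gamma$-convergent sequences are $\upsilon$-convergent — combined with the $\|\cdot\|$-boundedness of all approximants and \prettyref{rem:sequentially_mixed_bi_cont} (a) — that licenses them. Accordingly, the care in the write-up lies in verifying that the representation and approximation formulas, stated in the literature for the $\tau$-topology, indeed furnish $\|\cdot\|$-bounded sequences, so that the bridge from $\tau$- to $\upsilon$-convergence applies at each step.
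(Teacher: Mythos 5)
Your proposal is correct and matches the paper's own proof in essentially every step: the Laplace-transform representation of the resolvent, approximated by $\|\cdot\|$-bounded Riemann sums, for (a)$\Rightarrow$(b); the Post--Widder/Euler inversion formula for (b)$\Rightarrow$(a); and in both directions the same bridge from $\|\cdot\|$-bounded $\tau$-convergence to $\gamma$-convergence (\prettyref{rem:sequentially_mixed_bi_cont}~(a)) and hence to $\upsilon$-convergence, which justifies interchanging the $\upsilon$-continuous seminorms with the limits. The only discrepancy is cosmetic: the inversion formula you invoke is \cite[Corollary 2.10, p.~47]{kuehnemund2001} rather than a result of \cite{kuehnemund2003}.
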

\begin{proof}
(a)$\Rightarrow$(b): First, we show that $(A,D(A))$ is $\Gamma_{\upsilon}$-dissipative. 
We note that $(0,\infty)\subseteq\rho(A)$ and
\[
R(\lambda,A)x=\int_{0}^{\infty}\euler^{-\lambda t}T(t)x\d t
\]
for all $\lambda>0$ and $x\in X$ by \cite[Theorem 12, p.~215]{kuehnemund2003} and \cite[Definition 9, p.~213]{kuehnemund2003} 
where the integral is an improper $\tau$-Riemann integral. The sequence of Riemann sums that approximate the integral 
on the right-hand side w.r.t.~$\tau$ are $\|\cdot\|$-bounded for each $\lambda>0$ and $x\in X$. 
Due to \prettyref{rem:sequentially_mixed_bi_cont} (a) this means that this sequence of Riemann sums is actually $\gamma$-convergent 
and thus $\upsilon$-convergent by assumption. Therefore we have for all $\lambda>0$, $p\in\Gamma_{\upsilon}$ and $x\in X$ that 
\[
    p\left(R(\lambda,A)x\right)
\leq\int_{0}^{\infty}\euler^{-\lambda t}p(T(t)x)\d t
\leq\int_{0}^{\infty}\euler^{-\lambda t}p(x)\d t
=\frac{1}{\lambda}p(x)
\] 
where we used that $p$ is $\upsilon$-continuous for the first inequality. Hence $(A,D(A))$ is $\Gamma_{\upsilon}$-dissipative. 
In combination with \prettyref{thm:gen_bi_cont_contraction} this yields that $\lambda-A$ is surjective for some $\lambda>0$. 

(b)$\Rightarrow$(a): Due to \prettyref{thm:gen_bi_cont_contraction}, $(A,D(A))$ generates a 
$\tau$-bi-continuous contraction semigroup on $X$, and $(0,\infty)\subseteq\rho(A)$ by \prettyref{prop:lambda_A} (c). 
Furthermore, we have by the Post--Widder inversion formula \cite[Corollary 2.10, p.~47]{kuehnemund2001} that 
\[
T(t)x=\tau\text{-}\lim_{n\to\infty}\left(\frac{n}{t}R\left(\frac{n}{t},A\right)\right)^{n}x
\]
for all $t>0$ and $x\in X$. As a consequence of \prettyref{rem:norming_diss_Banach_diss} the $\tau$-convergent sequence 
$((\tfrac{n}{t}R(\tfrac{n}{t},A))^{n}x)_{n\in\N}$ is $\|\cdot\|$-bounded for each $t>0$ and $x\in X$, 
thus $\gamma$-convergent by \prettyref{rem:sequentially_mixed_bi_cont} (a) and so $\upsilon$-convergent by assumption. 
We deduce from \eqref{eq:res_cont} that for all $t>0$, $p\in\Gamma_{\upsilon}$ and $x\in X$ it holds that
\[
 p(T(t)x)
=\lim_{n\to\infty}\left(\frac{n}{t}\right)^{n}p\left(R\left(\frac{n}{t},A\right)^{n}x\right)
\underset{\eqref{eq:res_cont}}{\leq} p(x)
\] 
where we used that $p$ is $\upsilon$-continuous for the first equality. Further, for $t=0$ we have $p(T(t)x)=p(x)$. 
We conclude that statement (a) holds. 
\end{proof}

The assumptions of \prettyref{prop:gen_bi_cont_contraction} (a) are up to rescaling fulfilled for any 
$\tau$-bi-continuous semigroup on $X$ if it is $\upsilon$-equicontinuous for $\upsilon=\tau$ or $\gamma_{s}$ or $\gamma$ 
(the assumption on $\upsilon$-equicontinuity may fail if $\upsilon=\tau$ by \prettyref{rem:bi_dissip_not_reasonable}).

\begin{rem}
Let $(X,\|\cdot\|,\tau)$ be a sequentially complete Saks space and $\upsilon=\tau$, $\gamma_{s}$ or $\gamma$. 
Then there exists a norming directed system of continuous seminorms that generates the topology $\upsilon$ 
by \prettyref{defn:mixed_top_Saks} (c) for $\upsilon=\tau$, by \prettyref{rem:submixed_norming} for $\upsilon=\gamma_s$ 
and by \prettyref{rem:mixed_norming} for $\upsilon=\gamma$. 
Further, let $(T(t))_{t\geq 0}$ be a $\tau$-bi-continuous, $\upsilon$-equicontinuous semigroup on $X$ 
and $\omega\in\R$ be its type (see \prettyref{defn:bi_continuous} (iii)).
By modifying the proof of \cite[Remark 3.12, p.~9--10]{budde_wegner2022} one can show that for the $\tau$-bi-continuous, 
$\upsilon$-equicontinuous contraction semigroup $(e^{-\omega t}T(t))_{t\geq 0}$ on $X$ there exists a norming
directed system of continuous seminorms $\Gamma_{\upsilon}$ that generates $\upsilon$ such that 
$p(e^{-\omega t}T(t)x)\leq p(x)$ for all $t\geq 0$, $p\in\Gamma_{\upsilon}$ and $x\in X$. 
\end{rem}

In addition, we have the following characterisation in the case of complete, C-sequential Saks spaces $(X,\|\cdot\|,\tau)$ 
and $\upsilon=\gamma$. 

\begin{prop}\label{prop:mixed_equi_diss}
Let $(X,\|\cdot\|,\tau)$ be a complete, C-sequential Saks space and $(A,D(A))$ the generator of a $\tau$-bi-continuous semigroup 
$(T(t))_{t\geq 0}$ on $X$. Then the following assertions are eqivalent:
\begin{enumerate}
\item $(T(t))_{t\geq 0}$ is $\gamma$-equicontinuous. 
\item There is a directed system of continuous seminorms $\Gamma_{\gamma}$ that generates the mixed topology $\gamma$ such that
$(A,D(A))$ is $\Gamma_{\gamma}$-dissipative. 
\end{enumerate}
\end{prop}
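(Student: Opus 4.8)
The plan is to prove the two implications separately. Throughout I use that, by C-sequentiality and \prettyref{rem:relation_gamma_bi_cont_sg}, the semigroup $(T(t))_{t\geq 0}$ is $\gamma$-strongly continuous, and that the notion of $\gamma$-equicontinuity is independent of the chosen generating system of seminorms. The recurring technical device is \prettyref{rem:sequentially_mixed_bi_cont}~(a): a $\|\cdot\|$-bounded $\tau$-convergent sequence is automatically $\gamma$-convergent, which lets me feed $\tau$-limits coming from the bi-continuous structure into $\gamma$-continuous seminorms.

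For (a)$\Rightarrow$(b) I would first manufacture an invariant generating system. Fixing any directed system $\Gamma$ generating $\gamma$ and using $\gamma$-equicontinuity, I set $\tilde p(x)\coloneqq\sup_{t\geq 0}p(T(t)x)$ for $p\in\Gamma$; equicontinuity yields $q\in\Gamma$ and $C\geq 0$ with $\tilde p\leq C q<\infty$, so each $\tilde p$ is a $\gamma$-continuous seminorm, and since $p\leq\tilde p\leq Cq$ the directed system $\Gamma_\gamma\coloneqq\{\tilde p\mid p\in\Gamma\}$ again generates $\gamma$, while the semigroup law gives $\tilde p(T(s)x)\leq\tilde p(x)$ for all $s\geq 0$. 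To verify $\Gamma_\gamma$-dissipativity I avoid the resolvent and argue locally: for $x\in D(A)$ and $\lambda>0$ the difference quotients $\tfrac{1}{h}\bigl((1+\lambda h)x-T(h)x\bigr)=\lambda x-\tfrac{1}{h}(T(h)x-x)$ converge to $(\lambda-A)x$ in $\tau$ and are $\|\cdot\|$-bounded by the domain condition in \prettyref{defn:bi_cont_generator}, hence converge in $\gamma$. Since $\tilde p$ is $\gamma$-continuous and, by the reverse triangle inequality together with $\tilde p(T(h)x)\leq\tilde p(x)$, one has $\tfrac1h\tilde p\bigl((1+\lambda h)x-T(h)x\bigr)\geq\lambda\tilde p(x)$ for each $h>0$, passing to the limit yields $\tilde p((\lambda-A)x)\geq\lambda\tilde p(x)$, i.e.\ $\Gamma_\gamma$-dissipativity.

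For (b)$\Rightarrow$(a) I would run the Post--Widder inversion. From $\Gamma_\gamma$-dissipativity and \prettyref{prop:lambda_A}~(a) every $\lambda-A$ is injective and satisfies \eqref{eq:res_cont}; as $(A,D(A))$ generates a bi-continuous semigroup of some type $\omega$, we have $(\omega,\infty)\subseteq\rho(A)$ together with $\|R(\lambda,A)^n\|_{\mathcal{L}(X)}\leq M(\lambda-\omega)^{-n}$ by the Hille--Yosida theorem \cite[Theorem 16, p.~217]{kuehnemund2003}, and iterating \eqref{eq:res_cont} gives $p(R(\lambda,A)^n x)\leq\lambda^{-n}p(x)$ for all $p\in\Gamma_\gamma$ and $\lambda>\omega$. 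The Post--Widder formula \cite[Corollary 2.10, p.~47]{kuehnemund2001} writes $T(t)x=\tau\text{-}\lim_{n\to\infty}\bigl(\tfrac{n}{t}R(\tfrac{n}{t},A)\bigr)^n x$; the above norm bound makes the (eventually defined) approximants $\|\cdot\|$-bounded for fixed $t$, so the convergence is in fact in $\gamma$ by \prettyref{rem:sequentially_mixed_bi_cont}~(a), and $\gamma$-continuity of $p$ yields $p(T(t)x)\leq\lim_{n\to\infty}(\tfrac{n}{t})^n p(R(\tfrac{n}{t},A)^n x)\leq p(x)$. Since $t=0$ is trivial, this shows $p(T(t)x)\leq p(x)$ for all $t\geq 0$, $p\in\Gamma_\gamma$, $x\in X$, which is exactly $\gamma$-equicontinuity.

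In both directions the genuine work is the interchange of a $\gamma$-continuous seminorm with a limit that is initially only a $\tau$-limit, bridged each time by $\|\cdot\|$-boundedness (of difference quotients in (a)$\Rightarrow$(b), of the Post--Widder approximants in (b)$\Rightarrow$(a)) via \prettyref{rem:sequentially_mixed_bi_cont}~(a). I expect the main obstacle to be the construction of the invariant seminorms $\tilde p$ and the survival of the local dissipativity inequality under the passage to the limit, since this is where \emph{global} $\gamma$-equicontinuity, rather than the mere quasi-$\gamma$-equicontinuity automatically available on a C-sequential Saks space, is indispensable for the finiteness and $\gamma$-continuity of the $\tilde p$.
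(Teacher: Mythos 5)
Your proposal is correct, but it takes a genuinely different route from the paper. The paper's proof is two lines: by \prettyref{rem:relation_gamma_bi_cont_sg} the C-sequentiality of the Saks space makes any $\tau$-bi-continuous semigroup quasi-$\gamma$-equicontinuous, so the semigroup falls under the locally convex theory of \cite{albanese2016}, and the equivalence is then exactly \cite[Propositions 4.2, 4.4]{albanese2016} applied on $(X,\gamma)$. You instead inline both directions inside the bi-continuous framework: for (a)$\Rightarrow$(b) the invariant seminorms $\tilde p=\sup_{t\geq 0}p(T(t)\,\cdot\,)$ together with the difference-quotient argument (where the domain condition $\sup_{t\in(0,1]}\|T(t)x-x\|/t<\infty$ from \prettyref{defn:bi_cont_generator} supplies the norm-boundedness needed to upgrade the $\tau$-limit to a $\gamma$-limit), and for (b)$\Rightarrow$(a) the Hille--Yosida estimates of \cite{kuehnemund2003} plus the Post--Widder formula of \cite{kuehnemund2001}, again upgraded to $\gamma$-convergence via norm-boundedness of the approximants --- this second half closely parallels the paper's own proof of \prettyref{prop:gen_bi_cont_contraction} (b)$\Rightarrow$(a), but without any norming assumption. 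Two remarks on what each approach buys. The paper's citation route is short but genuinely needs C-sequentiality, since \cite[Proposition 4.4]{albanese2016} is applied to a semigroup that must first be known to be quasi-$\gamma$-equicontinuous; your argument never invokes C-sequentiality or completeness of $(X,\gamma)$, only sequential completeness of the Saks space, so it in fact proves the equivalence under weaker hypotheses. Also, your choice of difference quotients rather than the Laplace-transform representation of the resolvent in (a)$\Rightarrow$(b) is the right one: for a semigroup of type $\omega>0$ the integral representation of $R(\lambda,A)$ is only available for $\lambda>\omega$, whereas dissipativity demands the inequality for all $\lambda>0$, which your local argument delivers directly. The only blemish is cosmetic: the iterated estimate $p(R(\lambda,A)^n x)\leq\lambda^{-n}p(x)$ requires $\lambda>\max(\omega,0)$ (dissipativity, hence \eqref{eq:res_cont}, is only stated for $\lambda>0$), but since $n/t\to\infty$ in the Post--Widder limit this is immaterial.
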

\begin{proof}
By \prettyref{rem:relation_gamma_bi_cont_sg} we know that $(T(t))_{t\geq 0}$ is quasi-$\gamma$-equicontinuous 
if $(X,\|\cdot\|,\tau)$ is a sequentially complete C-sequential Saks space. Hence the equivalence of the assertions (a) and (b) 
follows from \cite[Propositions 4.2, 4.4, p.~933, 935]{albanese2016}. 
\end{proof}

The condition that $(X,\|\cdot\|,\tau)$ is a complete, C-sequential Saks space is quite often fulfilled, 
e.g.~for the examples from \prettyref{rem:complete_saks} under some minor constraints by 
\cite[Remark 3.19, p.~14]{kruse_schwenninger2022}, \cite[Remark 3.20 (c), p.~15]{kruse_schwenninger2022}, 
\cite[Example 4.12, p.~24--25]{kruse_schwenninger2022} and \cite[Corollary 3.23 (b), p.~17]{kruse_schwenninger2022}.

\begin{rem}\label{rem:C_sequential_Saks}
\begin{enumerate}
\item Let $\Omega$ be a hemicompact Hausdorff $k_{\R}$-space or a Polish space. 
Then $(\mathrm{C}_{\operatorname{b}}(\Omega),\|\cdot\|_{\infty},\tau_{\operatorname{co}})$ is a complete, C-sequential Saks space.
\item Let $(X,\|\cdot\|)$ be a separable Banach space.
Then $(X',\|\cdot\|_{X'},\sigma^{\ast})$ is a complete, C-sequential Saks space.
\item Let $(X,\|\cdot\|)$ be an SWCG space (see \cite[p.~387]{schluechtermann1988}), 
or a sequentially $\sigma(X,X')$-complete space with an almost shrinking basis (see \cite[p.~75]{kalton1973}). 
Then the triple $(X',\|\cdot\|_{X'},\mu^{\ast})$ is a complete, C-sequential Saks space. 
\item Let $(X,\|\cdot\|_{X})$ be a separable Banach space and $(Y,\|\cdot\|_{Y})$ a Banach space.
Then $(\mathcal{L}(X;Y),\|\cdot\|_{\mathcal{L}(X;Y)},\tau_{\operatorname{sot}})$ is a complete, C-sequential Saks space.
\item Let $H$ be a separable Hilbert space. 
Then $(\mathcal{L}(H),\|\cdot\|_{\mathcal{L}(H)},\tau_{\operatorname{sot}^{\ast}})$ 
is a complete, C-sequential Saks space.
\item Let $\Omega$ be a Polish space. 
Then $(\mathrm{M}_{\operatorname{t}}(\Omega),\|\cdot\|_{\mathrm{M}_{\operatorname{t}}(\Omega)},\sigma(\mathrm{M}_{\operatorname{t}}(\Omega),\mathrm{C}_{\operatorname{b}}(\Omega)))$ is a complete, C-sequential Saks space.
\end{enumerate}
\end{rem}

\begin{rem}\label{rem:bi_dissip_not_reasonable}
Let $(X,\|\cdot\|,\tau)$ be a complete, C-sequential Saks space.
Due to \prettyref{rem:relation_gamma_bi_cont_sg} assertion (a) of \prettyref{prop:mixed_equi_diss} always holds up to 
rescaling, and thus for any $\tau$-bi-continuous semigroup $(T(t))_{t\geq 0}$ on $X$ there is a rescaling such that 
the generator of the rescaled semigroup is $\Gamma_{\gamma}$-dissipative for some system of continuous seminorms 
$\Gamma_{\gamma}$ that generates the mixed topology $\gamma$. 

On the other hand, there are important examples of $\tau$-bi-continuous semigroups that are not quasi-$\tau$-equicontinuous. 
For instance, the Gau{\ss}--Weierstra{\ss} semigroup on the complete, C-sequential Saks space 
$(\mathrm{C}_{\operatorname{b}}(\R^{d}),\|\cdot\|_{\infty},\tau_{\operatorname{co}})$ is $\tau_{\operatorname{co}}$-bi-continuous but 
not locally $\tau_{\operatorname{co}}$-equicontinuous by \cite[Examples 6 (a), p.~209--210]{kuehnemund2003}. 
Since there is some $\lambda>0$ such that $\lambda-A$ is surjective for the generator $(A,D(A))$ 
of the Gau{\ss}--Weierstra{\ss} semigroup by \cite[Lemma 7, Proposition 8, Theorem 12, p.~211--212, 215]{kuehnemund2003}, 
it follows from \prettyref{prop:gen_bi_cont_contraction} with 
$\upsilon=\tau_{\operatorname{co}}$ that its generator $(A,D(A))$ is not bi-dissipative 
(even after rescaling, cf.~\cite[Example 3.9, p.~8]{budde_wegner2022}). 
Another example of a $\tau_{\operatorname{co}}$-bi-continuous semigroup which has no bi-dissipative generator 
(even after rescaling) is the left translation semigroup on the complete, C-sequential Saks space 
$(\mathrm{C}_{\operatorname{b}}(\R),\|\cdot\|_{\infty},\tau_{\operatorname{co}})$ which 
is $\tau_{\operatorname{co}}$-bi-continuous, even locally $\tau_{\operatorname{co}}$-equicontinuous but not 
quasi-$\tau_{\operatorname{co}}$-equicontinuous by \cite[Examples 6 (b), p.~209--210]{kuehnemund2003} and 
\cite[Example 3.2, p.~549]{kunze2009}. 

Nevertheless, the Gau{\ss}--Weierstra{\ss} semigroup and the left translation semigroup are 
quasi-$\beta_{0}$-equicontinuous by \prettyref{rem:relation_gamma_bi_cont_sg} and \prettyref{rem:C_sequential_Saks} (a), 
and thus both have (after rescaling) a $\Gamma_{\beta_{0}}$-dissipative generator for some system of continuous seminorms 
$\Gamma_{\beta_{0}}$ that generates the mixed topology $\beta_{0}=\gamma(\|\cdot\|_{\infty},\tau_{\operatorname{co}})$. 
This underlines that in the framework of $\tau$-bi-continuous semigroups the concept of a bi-dissipative operator resp.~generator 
is not the correct choice whereas the concept of a $\Gamma_{\gamma}$-dissipative operator resp.~generator is the 
more reasonable one. 
\end{rem}

\subsection*{Acknowledgement}
We would like to thank David Jornet for his kind explanations regarding the proof of 
\cite[Proposition 3.11 (iii), p.~927]{albanese2016}.

\bibliography{biblio_lumer_phillips}
\bibliographystyle{plainnat}
\end{document}